\newtheorem{remark}{Remark}[section]
\newtheorem{theorem}{Theorem}[section]
\newtheorem{proposition}{Propostion}[section]
\numberwithin{equation}{section}
\newcommand   {\F}  {\mathcal{F}}
\renewcommand {\i}  {\infty}
\renewcommand {\o}  {\omega}
\newcommand   {\B}  {\mathcal{B}}
\newcommand   {\D}  {\mathcal{D}}
\renewcommand {\O}  {\Omega}
\newcommand   {\pt} {\partial}
\newcommand   {\R}  {\mathbf{R}}
\begin{document}

\title{\bf Anticipating Stochastic $2D$ Navier-Stokes Equations}
\author{\ Salah Mohammed$^{1}$,
Tusheng Zhang$^{2}$}
\footnotetext[1]{\ Department of Mathematics,
Southern Illinois University, Carbondale, Illinois 62901, USA. \newline \indent Email: salah@sfde.math.siu.edu.
The research of this author is supported
by NSF award DMS-0705970.}
\footnotetext[2]{\ Department of Mathematics,
University of Manchester, Oxford Road, Manchester M13 9PL, England, \newline \indent
U.K. Email: tzhang@maths.man.ac.uk}

\maketitle
\begin{abstract}
In this article, we consider  the two-dimensional stochastic Navier-Stokes equation (SNSE) on a smooth bounded domain, driven by affine-linear multiplicative white noise and with random initial conditions and Dirichlet boundary conditions. The random initial condition is allowed to anticipate the forcing noise.  Our main objective is to prove the existence of a solution to the SNSE under sufficient Malliavin regularity of the initial condition. To this end we employ anticipating calculus techniques.
\end{abstract}

\allowdisplaybreaks
\noindent {\bf AMS Subject Classification:} Primary 60H15  Secondary
60F10,  35Q30.
\section {Introduction. The main result.}

Two-dimensional stochastic Navier-Stokes equations (SNSE's) are often used to model the time
evolution of the velocity field for an incompressible fluid in a smooth bounded planar domain.
Existing models of fluid dynamics employ SNSE's with deterministic initial and boundary conditions.

Our main objective in this article is to prove existence of a variational  solution to the SNSE with 
random initial conditions that may possibly anticipate the driving noise.

\medskip
\medskip

The impetus for considering randomness in the initial condition for the SNSE is two-fold:
\begin{itemize}
\item{} Random measurement errors exist in physical models of hydrodynamic fluid movement.
\item{} Near stationary solutions, multiplicative ergodic theory techniques establish the existence of local random invariant manifolds that necessarily anticipate the driving noise of the SNSE ([MZ]). Thus a dynamic chcracterization of the semiflow along the invariant manifolds will require anlaysis of the SNSE with anticipating initial conditions. In particular, our main result in this article implies that each stationary point generates a stationary solution of the SNSE.
\end{itemize}


For simplicity of exposition we will only consider linear white noise with no additive colored noise. The case of affine noise (linear + additive) can be addressed via similar techniques and is left to the reader.

Consider the following two-dimensional stochastic Navier-Stokes
equation (SNSE) with Dirichlet boundary conditions and a random initial condition:
\begin{equation}
\left.
\begin{aligned}
du-\nu\triangle u\ dt+(u\cdot\nabla) u\ dt+\nabla
p\ dt&=\sum_{k=1}^{\infty} \sigma_k u(t)\,\circ dW_k(t) ,\\
(div\ u)(t,x)&=0,\ \quad  x\in D,\, t>0,\\
u(t,x)&=0, \quad x\in \partial D,\, t>0,\\
u(0,x)&=Y(x),\quad x\in D.
\end{aligned} \right \} 
\end{equation}
In the above SNSE, $D$ is a bounded domain in $\R^2$ with smooth boundary
$\partial D$, $u(t,x)\in\R^2 $ denotes the velocity field at
time $t$ and position $x \in D$, $p(t,x)$ denotes the pressure field, and
$\nu >0$ the viscosity coefficient.
Moreover, the random forcing field is provided by a family of independent one-dimensional standard Brownian motions $W_k, k \geq 1$, defined on a complete filtered Wiener space
$(\Omega, \F, (\F_t)_{t \geq 0}, P)$. We assume that the noise parameters $\sigma_k, \, k \geq 1,$ are such that $\displaystyle \sum_{k=1}^{\infty} \sigma_k^2 < \i$. The initial condition $Y$ is an $\F \otimes \B(D)$-measurable random field on $D$ where $\B(D)$ is the Borel $\sigma$-algebra of $D$.

\medskip

\medskip
 Under deterministic initial conditions, there is a large
amount of literature on the stochastic Navier-Stokes equation and its abstract setting. We will only refer to some of it.
A good reference for stochastic Navier-Stokes equations driven by
additive noise is the book \cite{D-Z.1} and the references therein.
The existence and uniqueness of solutions of stochastic $2D$
Navier-Stokes equations with multiplicative noise is established in
\cite{Fl.1} and \cite{S-S}. Ergodic properties and invariant measures
of stochastic $2D$ Navier-Stokes equations are studied in
\cite{Fl.1}
and \cite{H-M}. Large deviations under small noise and for occupation measures of stochastic $2D$ Navier-Stokes equations are studied in
\cite{S-S} and \cite{Gourcy}.
The existence of a $C^{1,1}$ cocycle and a mulitplicative ergodic theory for the SNSE (2.1) is established in \cite{M-Z}.

\medskip

In order to state our main result in this article, we consider the Hilbert space
$$
V:=\{v\in H^1_0(D,\R^2): \nabla\cdot v=0\  a.e. \,\, \mbox{in } D\},
$$
with the norm
$$
||v||_V:=\big(\int_D |\nabla v|^2\,dx\big)^{\frac{1}{2}}
$$
and associated inner product
$$\ll v_1,v_2 \gg_{V} := \int_D \nabla v_1\cdot \nabla v_2\,dx, \quad v_1, v_2 \in V.$$
 Denote by $H$ the closure of $V$ in the $L^2$-norm
$$
|v|_H:=\big(\int_D |v|^2\,dx\big)^{\frac{1}{2}}.
$$
The inner product on $H$ will be denoted by $<\cdot,\cdot>$.

\medskip

Our main result is the following existence theorem for  solutions of the SNSE (1.1):

\begin{theorem}
In the SNSE (1.1), assume that the initial random field $Y$ belongs to the Malliavin Sobolev space $\D^{1,4} (H)$ of all $\F$-measurable and Malliavin differentiable random variables $\O \to H$ with Malliavin derivatives having fourth-order moments. Then the SNSE (1.1) has a weak global solution with initial condition $Y$.
\end{theorem}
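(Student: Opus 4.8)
The plan is to transform the anticipating equation into a family of adapted equations via a random (anticipating) change of variables, solve each adapted equation by standard Galerkin/compactness methods, and then patch the solutions back together using Malliavin calculus to control the anticipating initial data.

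\medskip

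\textbf{Step 1: The Doss--Sussmann-type transformation.}
First I would eliminate the multiplicative Stratonovich noise. Write $Z(t) = \exp\bigl(\sum_{k=1}^\infty \sigma_k W_k(t)\bigr)$, which solves $dZ = Z \sum_k \sigma_k \circ dW_k$ (this scalar exponential is well-defined because $\sum \sigma_k^2 < \infty$ and the $W_k$ are independent, so $\sum_k \sigma_k W_k(t)$ converges in $L^2(\Omega)$ and has a continuous version). Setting $v(t,x) := Z(t)^{-1} u(t,x)$ formally turns (1.1) into the random PDE
\begin{equation}
\frac{\partial v}{\partial t} - \nu \triangle v + Z(t)\,(v\cdot\nabla)v + \nabla q = 0, \qquad \nabla\cdot v = 0,\qquad v|_{\partial D}=0,\qquad v(0)=Y,
\end{equation}
where $q = Z^{-1} p$. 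The point is that (1.2) has no stochastic integral: it is, for each fixed $\omega$, a deterministic Navier--Stokes-type equation with a continuous (in $t$) bounded-below coefficient $Z(t,\omega)$ multiplying the nonlinearity, and the pressure is eliminated in the usual way by projecting onto $H$ with the Leray projector $P$. The equation for $v$ is an evolution equation in $H$: $\dot v + \nu A v + Z(t) B(v,v) = 0$, $v(0)=Y$, where $A = -P\triangle$ is the Stokes operator and $B(v,v) = P[(v\cdot\nabla)v]$.

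\medskip

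\textbf{Step 2: Pathwise solvability of the transformed equation.}
For $P$-a.e.\ fixed $\omega$, equation (1.2) is a classical $2D$ Navier--Stokes system with a time-dependent scalar weight on the convective term and a fixed (in $\omega$) initial datum $Y(\omega) \in H$. I would solve it by the standard Galerkin method: project onto the span of the first $n$ eigenfunctions of $A$, obtain a priori bounds in $L^\infty(0,T;H)\cap L^2(0,T;V)$ using the energy identity (the nonlinear term vanishes in the $H$-inner product, exactly as in the deterministic case, since $\langle Z(t) B(v,v), v\rangle = 0$), then pass to the limit using Aubin--Lions compactness. Uniqueness in $2D$ follows from the usual Ladyzhenskaya inequality argument; the extra factor $Z(t)$ is harmless since it is bounded on $[0,T]$ for a.e.\ $\omega$. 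This yields, for a.e.\ $\omega$, a unique solution $v(\cdot,\omega) \in C([0,T];H)\cap L^2(0,T;V)$.

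\medskip

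\textbf{Step 3: Measurability, adaptedness, and recovering $u$.}
The delicate point — and the one I expect to be the main obstacle — is showing that $u(t) := Z(t)\, v(t)$ is a genuine \emph{weak solution of the stochastic equation (1.1)}, i.e.\ that the Stratonovich integral $\int_0^t \sum_k \sigma_k u(s)\circ dW_k(s)$ makes sense and the identity holds. Because $Y$ is \emph{not} $\F_0$-measurable but only $\F$-measurable, $u(t)$ is not adapted, so the stochastic integral must be interpreted in the Skorohod (anticipating) sense, and one must convert the Stratonovich--Skorohod relation. This is where the hypothesis $Y \in \D^{1,4}(H)$ enters: Malliavin differentiability of $Y$, propagated through the (pathwise, hence $\omega$-by-$\omega$ differentiable in an appropriate sense) solution map of the deterministic equation (1.2), gives that $v(t)$ and hence $u(t)$ lie in a suitable Malliavin--Sobolev space, with the trace of the Malliavin derivative $D_s u(t)$ along the diagonal controlled in $L^2$; the $L^4$-integrability of $DY$ is exactly what is needed to absorb the nonlinear term when estimating $D_s v(t)$ via the variational equation obtained by differentiating (1.2) in the Malliavin sense. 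One then writes the Stratonovich integral as the Skorohod integral plus a trace (complementary) term involving $\sigma_k \sigma_j$ and $D_s u$, and checks by an approximation/localization argument (e.g.\ truncating $Y$ and using the continuity of the solution map) that the transformed identity $v(t) = Y - \nu\int_0^t A v\, ds - \int_0^t Z(s) B(v,v)\, ds$ is equivalent to the Skorohod-form of (1.1). The remaining routine tasks are: verifying joint measurability of $(\omega,t)\mapsto v(t,\omega)$ (from the Galerkin construction and uniqueness), checking the Malliavin chain rule applies to $Z(t)^{-1}$, and assembling the pathwise estimates into the claimed global-in-time weak solution.
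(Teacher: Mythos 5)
Your proposal is correct and follows essentially the same route as the paper: the transformation $u=Qv$ with $Q(t)=\exp\bigl(\sum_k\sigma_k W_k(t)\bigr)$ reducing (1.1) to a pathwise random Navier--Stokes equation, pathwise Galerkin solvability, propagation of Malliavin differentiability of $v(t,\cdot)$ evaluated at $Y$ (with localization by truncating $Y$ and $Q$), and verification that $u(t,Y)=Q(t)v(t,Y)$ satisfies the anticipating equation via the Skorohod--Stratonovich correction involving the diagonal traces $\D_s^{\pm}u(s,Y)$. The paper implements the final equivalence by a direct Riemann-sum (telescoping) computation precisely because no infinite-dimensional chain rule applies, but this is the same strategy you outline in Step 3.
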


\section {Abstract formulation.}

\medskip
\indent To establish an abstract framework for the dynamics of the stochastic Navier-Stokes equation (1.1), we denote by $P_H$ the Helmholtz-Hodge projection of the Hilbert space
$L^2 (D,\R^2)$ onto the energy space $H$. Consider the (Stokes) operator $A$ in $H$ defined
by the formula
$$Au:=-\nu P_H\triangle u, \quad   u \in H^2(D,\R^2)\cap V,$$
 and the bilinear operator $B$ given by
$$B(u,v):=P_H\big((u\cdot\nabla) v\big),$$
whenever $u,v$ are such that $(u\cdot\nabla v)$ belongs to the space $L^2 (D, \R^2)$. We will often employ
the short notation $B(u):=B(u,u)$.

\indent By applying the operator $P_H$ to each term of the SNSE (1.1),
we can rewrite the latter equation in the following abstract form:
\begin{equation}
du(t)+Au(t)\, dt+B(u(t))\,dt= \sum_{k=1}^{\infty} \sigma_k u(t)\, \circ dW_k(t), \quad t >0,
\end{equation}
in $V'$ with the initial condition
\begin{equation}
u(0)=u_0 \in H.
\end{equation}
Here $V'$ stands for the dual of $V$.


Our approach is to identify the Hilbert space $H$ in Section 1 with its dual $H'$ and
consider the stochastic Navier-Stokes equation (1.1) in the framework of the Gelfand triple:
$$V\subset H\cong H'\subset V'.$$
%
Thus, we may consider the Stokes operator $A$ as a bounded linear map from $V$ into $V'$. Moreover, we also denote by $<\cdot,\cdot>: V \times V' \to \R$,
the canonical bilinear pairing between $V$ and $V'$. Hence, using integration by parts,
we have
\begin{equation}
<Au,w>=\nu\sum_{i,j=1}^2\int_D
\partial_iu_j\partial_iw_j dx=\nu \ll u,w \gg
\end{equation}
for $u=(u_1,u_2)\in V$, $w=(w_1,w_2)\in V$.

Define the real-valued trilinear form $b$ on $H\times H\times H$ by setting
\begin{equation}
b(u,v,w):=\sum_{i,j}^2\int_D u_i\partial_iv_jw_j dx,
\end{equation}
whenever the integral in (2.4) exists. In particular, if
$u,v,w \in V$, then
$$
b(u,v,w)=<B(u,v),w>=<(u\cdot\nabla)v,w>=\sum_{i,j}^2\int_D u_i\partial_iv_jw_j\,dx.
$$
Using integration by parts, it is easy to see that
\begin{equation}
b(u,v,w)=-b(u,w,v),
\end{equation}
for all $u,v,w \in V$.
Thus,
\begin{equation}\label{antisymmetric}
b(u,v,v)=0
\end{equation}
for all  $u,v \in V$.

Throughout the paper, we will denote various
generic positive constants by the same letter c, although the constants may differ from line to line. We now list some well-known estimates for $b$ which will be used frequently in the sequel (see \cite{Te}, \cite{Ro} for example):
%
\begin{align}
|b(u,v,w)|&\leq c \|u\|_V\cdot\|v\|_V\cdot\|w\|_V, \quad u,v,w \in V,\\
|b(u,v,w)|&\leq c |u|_H\cdot\|v\|_V\cdot|Aw|_H, \quad u \in H, v \in V, w \in D(A),\\
|b(u,v,w)|&\leq c \|u\|_V\cdot|v|_H\cdot|Aw|_H, \quad u \in V, v \in H, w \in D(A),\\
|b(u,v,w)|&\leq 2
\|u\|^{\frac{1}{2}}_V \cdot|u|^{\frac{1}{2}}_H\cdot\|w\|^{\frac{1}{2}}_V \cdot |w|^{\frac{1}{2}}_H\cdot\|v\|_V, \quad u,v,w \in V.
\end{align}
Moreover, combining (2.3) and (2.8), we obtain
\begin{equation}\label{2.9}
|B(u,w)|_{V'}=\sup_{\|v\|_V\leq 1}|b(u,w,v)|=\sup_{\|v\|_V \leq
1}|b(u,v,w)|\leq
2\|u\|^{\frac{1}{2}}_V\cdot|u|^{\frac{1}{2}}_H \cdot \|w\|^{\frac{1}{2}}_V\cdot|w|^{\frac{1}{2}}_H
\end{equation}
for all $u,w \in V$.

\section{Malliavin differentiability of the SNSE}

\setcounter{equation}{0}

In this section, we will show that the solutions of the SNSE (with non-random initial conditions) are Malliavin differentiable. Our approach is to use a variational technique which transforms the
SNSE (2.1) into a {\it random} Navier-Stokes equation that we can then analyze using a combination of Galerkin approximations and a priori estimates (cf. [Te], [Ro]).

 Consider the SNSE
\begin{eqnarray}\label{3.1}\left\{
\begin{array}{l}
du(t,f)+Au(t,f)dt+B(u(t,f))dt=
\displaystyle \sum_{k=1}^{\infty}\sigma_k u(t,f)\,\circ dW_{k}(t), \, t > 0,\\
u(0,f)=f \in H,
\end{array}\right.
\end{eqnarray}
with a deterministic initial condition $f \in H$.
It is known that for each $f \in H$, the SNSE (\ref{3.1}) admits  a unique  strong (in probabilistic sense)
solution $u(\cdot,f) \in L^2(\Omega;C([0,T];H))\cap L^2(\Omega\times
[0,T];V)$ ([B-C-F]). Writing (3.1) in integral form, we have
\begin{equation}\label{eq:3.2}
u(t,f)=f-\int_0^t Au(s,f)\,ds-\int_0^t B(u(s,f))\,ds+ \sum_{k=1}^{\infty}\int_0^t\sigma_k u(s,f))\,\circ dW_k(s),
\end{equation}
for all $t \in [0,T]$.

Let $Q: [0,\i) \times \O \to \R$ be the solution of the
one-dimensional linear sode
\begin{equation}    \label{eq:1.2}
\left. \begin{aligned}
  dQ(t) &= \sum_{k=1}^{\i} \sigma_k Q(t)\,\circ dW_k(t), \quad t \ge 0, \\
  Q(0) &= 1.
\end{aligned}  \right\}
\end{equation}
By It\^o's formula, we have
 \begin{equation}    \label{eq:1.2}
\begin{aligned}
  Q(t) = \exp\bigg \{\sum_{k=1}^{\i} \sigma_k W_k(t)\bigg \},  \quad t \ge 0.
\end{aligned}
\end{equation}
This implies that
$$E \|Q\|_{\i} < \i $$
where
$$\|Q\|_{\i} \equiv \|Q(\cdot,\o)\|_{\i}:= \sup_{0 \leq t \leq T} Q(t,\o), \quad \o \in \O,$$
for any finite positive $T$.
Define
\begin{equation}    \label{eq:1.3}
  v(t,f) := u(t,f)Q^{-1}(t), \quad t \geq 0.
\end{equation}
Applying It\^o{}'s formula to the relation $u(t,f) = v(t,f)Q(t), \,\, t \geq 0,$
%
%
and using (3.3), it is easy to see that $v(t) \equiv v(t,f)$ satisfies the random NSE
\begin{equation}
\left. \begin{aligned} \label{eq:3.6}
  dv(t) &= -Av(t)\,dt - Q(t) B\big(v(t)\big)\,dt, \, \quad t \geq 0,  \\
  v(0) &= f \in H\,.
\end{aligned} \right\}
\end{equation}
%
%
\vskip 0.3cm

\medskip
We recall the following two results from \cite{M-Z}.

\medskip

\begin{proposition} \label{prp:3.1}
For $f \in H$ and $\o \in \O$, let $v(\cdot,f,\o) \in C\big([0,T],H)\cap L^2\big ([0,T],
V)$ be a  solution of (3.6)  on $[0,T]$
for some $T > 0$. Then for each $\o \in \O$ and any $f \in H$, the following estimates hold
\begin{equation}    \label{3.7}
 \sup_{0 \leq t \leq T} |v(t,f,\o)|_{H} \le |f|_H
\end{equation}
%
and
\begin{equation}    \label{3.8}
  \int^T_0 \|  v(t,f,\o)\|^2_{V}dt
    \le \frac{1}{2\nu} |f|^2_{H}.
\end{equation}
Moreover, for each $\o \in \O$, the map $H \ni f \mapsto v(\cdot, f, \o) \in C\big([0,T],H)\cap
L^2\big ([0,T], V)$ is Lipschitz on bounded sets in $H$.
%
\end{proposition}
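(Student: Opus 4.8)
The plan is to prove the two a priori bounds by testing the random NSE \eqref{eq:3.6} against $v(t)$ in the Gelfand triple $V \subset H \subset V'$, and then to derive the Lipschitz dependence on the initial datum by testing the equation satisfied by the difference of two solutions. First I would note that, since $v(\cdot,f,\o) \in C([0,T],H) \cap L^2([0,T],V)$ solves \eqref{eq:3.6}, the function $t \mapsto |v(t)|_H^2$ is absolutely continuous and
\begin{equation}
\frac{1}{2}\frac{d}{dt}|v(t)|_H^2 = \langle \dot v(t), v(t)\rangle = -\langle Av(t), v(t)\rangle - Q(t)\,\langle B(v(t)), v(t)\rangle .
\end{equation}
By \eqref{2.4} (the identity $\langle Au,w\rangle = \nu \ll u,w \gg$) the first term equals $-\nu\|v(t)\|_V^2$, and by the antisymmetry \eqref{antisymmetric} the trilinear term $b(v(t),v(t),v(t))$ vanishes, so the nonlinear contribution drops out entirely. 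Hence
\begin{equation}
\frac{1}{2}\frac{d}{dt}|v(t)|_H^2 + \nu\|v(t)\|_V^2 = 0 .
\end{equation}

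Integrating this identity from $0$ to $t$ gives $|v(t)|_H^2 + 2\nu\int_0^t \|v(s)\|_V^2\,ds = |f|_H^2$; dropping the (nonnegative) integral yields $|v(t)|_H \le |f|_H$ for all $t \in [0,T]$, which is \eqref{3.7}, and taking $t = T$ and dropping $|v(T)|_H^2 \ge 0$ yields $\int_0^T \|v(s)\|_V^2\,ds \le \frac{1}{2\nu}|f|_H^2$, which is \eqref{3.8}. Note that these bounds are uniform in $\o$ because $Q(t,\o)$ never entered them — the crucial structural fact is that the multiplicative-noise coefficient disappears after the variational transformation and the Navier-Stokes nonlinearity is energy-neutral.

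For the Lipschitz statement, fix $\o$ and two initial data $f, g$ in a bounded set of $H$, and set $w(t) := v(t,f,\o) - v(t,g,\o)$. Subtracting the two copies of \eqref{eq:3.6}, $w$ satisfies $\dot w = -Aw - Q(t)\bigl(B(v(\cdot,f)) - B(v(\cdot,g))\bigr)$ with $w(0) = f-g$. Testing against $w(t)$ and using bilinearity to write $B(v_f)-B(v_g) = B(w,v_f) + B(v_g,w)$, the term $b(v_g,w,w)$ vanishes by \eqref{antisymmetric}, leaving
\begin{equation}
\frac{1}{2}\frac{d}{dt}|w(t)|_H^2 + \nu\|w(t)\|_V^2 = -Q(t)\,b(w(t),v(t,f,\o),w(t)) .
\end{equation}
I would bound the right-hand side with the interpolation estimate \eqref{2.10}: $|b(w,v_f,w)| \le 2\|w\|_V |w|_H \|v_f\|_V$, then absorb $\|w\|_V$ by Young's inequality ($2\|Q\|_\i \|w\|_V |w|_H \|v_f\|_V \le \nu\|w\|_V^2 + \frac{\|Q\|_\i^2}{\nu}\|v_f\|_V^2 |w|_H^2$), to get
\begin{equation}
\frac{d}{dt}|w(t)|_H^2 \le \frac{2\|Q\|_\i^2}{\nu}\,\|v(t,f,\o)\|_V^2\,|w(t)|_H^2 .
\end{equation}
Grönwall's lemma then gives $|w(t)|_H^2 \le |f-g|_H^2 \exp\bigl(\frac{2\|Q\|_\i^2}{\nu}\int_0^T \|v(s,f,\o)\|_V^2\,ds\bigr)$, and by \eqref{3.8} the exponent is at most $\frac{\|Q\|_\i^2}{\nu^2}|f|_H^2$, which is bounded on bounded sets of $H$; re-integrating the differential inequality also controls $\int_0^T \|w\|_V^2\,dt$, giving Lipschitz continuity into $C([0,T],H) \cap L^2([0,T],V)$.

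The only genuinely delicate point is the justification of the energy identity itself: the chain rule $\frac{d}{dt}|v(t)|_H^2 = 2\langle \dot v(t), v(t)\rangle$ for the Gelfand triple requires $v \in L^2([0,T],V)$ with $\dot v \in L^2([0,T],V')$, so I would first check from \eqref{eq:3.6} that $Av \in L^2([0,T],V')$ (immediate since $A: V \to V'$ is bounded and $v \in L^2([0,T],V)$) and that $Q(\cdot)B(v(\cdot)) \in L^2([0,T],V')$ (using \eqref{2.9}: $|B(v)|_{V'} \le 2\|v\|_V^{1/2}|v|_H^{1/2}\|v\|_V^{1/2}|v|_H^{1/2} = 2\|v\|_V|v|_H$, which lies in $L^2([0,T])$ by \eqref{3.7}–\eqref{3.8}, and $\|Q\|_\i < \infty$ pathwise), whence $\dot v \in L^2([0,T],V')$ and the Lions–Magenes chain rule applies. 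Everything else is routine testing, antisymmetry, the interpolation inequality \eqref{2.10}, and Grönwall. I expect no essential obstacle beyond this standard regularity bookkeeping.
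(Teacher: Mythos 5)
Your proof is correct and follows essentially the standard route: the paper itself gives no proof of Proposition 3.1 (it merely recalls the result from [M-Z]), and your argument -- the energy identity exploiting $b(v,v,v)=0$ so that $|v(t)|_H^2+2\nu\int_0^t\|v(s)\|_V^2\,ds=|f|_H^2$, followed by the equation for the difference of two solutions, the interpolation bound (2.10), Young, and Gr\"onwall -- is exactly the argument used there. Two cosmetic points: the Lions--Magenes chain rule is already justified by the assumed regularity $v\in C([0,T],H)\cap L^2([0,T],V)$ together with $\|Q\|_\infty<\infty$, so invoking (3.7)--(3.8) at that stage is mildly circular though harmless; and to retain the $L^2([0,T],V)$ component of the Lipschitz estimate you should absorb only $\tfrac{\nu}{2}\|w\|_V^2$ in Young's inequality (rather than the full $\nu\|w\|_V^2$ as written), after which integration gives the stated continuity into $C([0,T],H)\cap L^2([0,T],V)$.
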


\medskip

\begin{proposition}  \label{th:3.1}
The solution map
$$H\ni f \rightarrow v(t,f,\o)\in H$$
of the random NSE (3.6) is $C^{1, 1}$
for each $\o \in \Omega$ and $t\geq 0$, and has Lipschitz
Fr\'e{}chet derivatives on bounded sets in $H$. Furthermore, the
Fr\'echet derivative $[0, \i) \ni t \to Dv(t,f,\o) \in L(H)$ is continuous in  $t$ and the following estimate holds:
\begin{equation}\label{3.24}
\sup_{0\le t\le T}\|D v(t,f)\|_{L(H)}\le
\exp \bigg (\frac{1}{2}\tilde c\|Q\|_{\infty}^2\frac{1}{2\nu}|f|_{H}^2 \bigg),
\end{equation}
where $L(H)$ denotes the space of bounded linear operators from $H$ into $H$.
\end{proposition}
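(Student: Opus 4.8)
For each fixed $\o\in\O$ the function $t\mapsto Q(t,\o)$ is a fixed positive continuous function on $[0,T]$, so (3.6) is a deterministic $2D$ Navier--Stokes system with a time-dependent coefficient in front of the nonlinearity, and the assertion is a smooth-dependence-on-initial-data statement for such a system. The plan is: first, to write down the equation satisfied by the candidate derivative, namely the linearisation of (3.6) in the initial datum; then, to solve it and bound its solution by energy estimates, using the cancellation (\ref{antisymmetric}) and the trilinear estimate (2.10) together with the a priori bounds (\ref{3.7})--(\ref{3.8}); and finally, to verify by a remainder estimate that the solution of the linearised equation really is the Fr\'echet derivative, and to upgrade the conclusion to $C^{1,1}$.

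For the first two steps I would fix $f,h\in H$ and let $\xi=\xi(t,f,h,\o)$ solve the linear equation
\begin{equation*}
  \dot\xi(t) = -A\xi(t) - Q(t)\big[B(\xi(t),v(t)) + B(v(t),\xi(t))\big],\qquad \xi(0)=h,
\end{equation*}
read in $V'$, where $v=v(\cdot,f,\o)$. Since $v\in C([0,T],H)\cap L^2([0,T],V)$ by Proposition~\ref{prp:3.1}, the coefficients are time-dependent bounded perturbations of $A$ in the variational sense, so by a Galerkin argument of Lions type this equation has a unique solution $\xi\in C([0,T],H)\cap L^2([0,T],V)$, linear in $h$; write $Dv(t,f,\o)h:=\xi(t)$. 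Pairing the equation with $\xi$ and using $\ll A\xi,\xi\gg=\nu\|\xi\|_V^2$ and $b(v,\xi,\xi)=0$ from (\ref{antisymmetric}) gives
\begin{equation*}
  \tfrac12\tfrac{d}{dt}|\xi|_H^2 + \nu\|\xi\|_V^2 = -Q(t)\,b(\xi,v,\xi).
\end{equation*}
Bounding the right-hand side by (2.10), $|b(\xi,v,\xi)|\le 2\|\xi\|_V|\xi|_H\|v\|_V$, and absorbing $\nu\|\xi\|_V^2$ with Young's inequality leaves $\frac{d}{dt}|\xi|_H^2\le\frac{2}{\nu}Q(t)^2\|v(t)\|_V^2|\xi|_H^2$; Gronwall's inequality and (\ref{3.8}) then yield
\begin{equation*}
  \sup_{0\le t\le T}|\xi(t)|_H^2 \le |h|_H^2\,\exp\!\Big(\frac{2}{\nu}\,\|Q\|_\infty^2\,\frac{1}{2\nu}|f|_H^2\Big),
\end{equation*}
which is (\ref{3.24}) up to the value of the generic constant $\tilde c$, after taking the supremum over $|h|_H\le1$ and a square root.

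For the third step I would set $r^\ep(t)=\ep^{-1}\big(v(t,f+\ep h)-v(t,f)\big)-\xi(t)$; subtracting the integral versions of (3.6) at $f+\ep h$ and at $f$ and expanding the quadratic term $B$, one checks that $r^\ep$ solves a linear equation of the same structure as the one for $\xi$, with an inhomogeneity of size $O(\ep)$ in $L^2([0,T],V')$, the Lipschitz-on-bounded-sets estimate for $f\mapsto v(\cdot,f,\o)$ from Proposition~\ref{prp:3.1} being used to control $v(\cdot,f+\ep h)-v(\cdot,f)$. The same energy estimate as above then gives $\sup_t|r^\ep(t)|_H\le c(|f|_H,\|Q\|_\infty,\nu,T)\,\ep\,|h|_H\to0$, so $\xi(t)=Dv(t,f,\o)h$ is indeed the Fr\'echet derivative, bounded in $L(H)$ by (\ref{3.24}). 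Continuity of $t\mapsto Dv(t,f,\o)\in L(H)$ follows from the $C([0,T],H)$-regularity and the equation for $\xi(t)-\xi(s)$ with the uniform bounds, and Lipschitz dependence of $Dv(t,f,\o)$ on $f$ on bounded sets is obtained by running the same scheme for $Dv(t,f_1)h-Dv(t,f_2)h$, whose equation is forced by terms containing $v(\cdot,f_1)-v(\cdot,f_2)$ (Lipschitz by Proposition~\ref{prp:3.1}) and $Dv(t,f_1)h-Dv(t,f_2)h$ itself, so Gronwall closes it; together these give $C^{1,1}$.

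The hard part will be making the third step uniform in the direction $h$ over the unit ball of $H$, so that the difference quotients converge in the \emph{operator} norm of $L(H)$ and not merely strongly; this forces one to track that every constant appearing in the energy and Gronwall estimates depends only on $|f|_H$, $\|Q\|_\infty$, $\nu$ and $T$ beyond the explicit factor $|h|_H$, and keeping this uniformity through the quadratic nonlinearity is the delicate bookkeeping. An alternative that sidesteps this is to carry out the entire argument at the level of the finite-dimensional Galerkin approximations $v_n(\cdot,f)$, each smooth in $f$ by ODE theory, to prove $n$-uniform $C^{1,1}$ bounds, and to pass to the limit.
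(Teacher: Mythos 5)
Your argument is correct and is exactly the linearisation--plus--energy-estimate scheme one expects here: the paper itself gives no proof of this proposition (it is recalled from the reference [M-Z]), but the cited proof, like the arguments the paper does carry out in Propositions 3.3--3.5, runs precisely through Galerkin approximation, the cancellation $b(v,\xi,\xi)=0$, the trilinear bound (2.10), and Gronwall together with (3.7)--(3.8), yielding (3.24) with $\tilde c=2/\nu$ as you compute. Your closing remark about uniformity in the direction $h$ is resolved the way you suggest: the remainder estimate closes quadratically, $\sup_t|v(t,f+h)-v(t,f)-\xi(t)|_H\le C(|f|_H,\|Q\|_\infty,\nu,T)\,|h|_H^2$, which gives Fr\'echet (not merely G\^ateaux) differentiability and the Lipschitz dependence of $Dv$ on $f$ in one stroke.
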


The following result is an apriori energy estimate in the space $V$ for the random NSE (3.1).

\begin{proposition}
Let $v(t,f)$ be the solution to equation (\ref{eq:3.6}). For any $f\in V$, we have
\begin{eqnarray}\label{e0.1}
\sup_{0\leq t\leq T}||v(t,f)||_V^2+\nu\int_0^T|Av(s,f)|_H^2ds\nonumber\\
\leq ||f||_V^2 \exp\bigg (c |f|_H^4\sup_{0\leq s\leq T}Q^4(s)\bigg ).
\end{eqnarray}
\end{proposition}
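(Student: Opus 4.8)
The plan is to derive the bound by testing the random NSE \eqref{eq:3.6} against $Av$ in $H$, estimating the resulting trilinear term by a two-dimensional Ladyzhenskaya-type inequality, and then closing the resulting (superlinear) differential inequality by feeding in the energy bounds of Proposition \ref{prp:3.1}. Fix $\o\in\O$. All computations are first carried out for the Galerkin approximations $v_n(\cdot)$ of \eqref{eq:3.6} built from the eigenfunctions of $A$ (so that $v_n(t)\in D(A)$ and $A$ commutes with the projections, making every step below legitimate), the estimates obtained being uniform in $n$; one then passes to $v=v(\cdot,f,\o)$ using weak lower semicontinuity of $\|\cdot\|_V$ and of the $L^2(0,T;D(A))$-norm, the convergence of the scheme being standard (cf. \cite{Te}, \cite{Ro}). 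Equivalently, for $f\in V$ one invokes the standard regularity $v\in C([0,T];V)\cap L^2(0,T;D(A))$ and computes directly. Step 1 (energy identity in $V$): taking the $H$-inner product of \eqref{eq:3.6} with $Av(t)$ and using (2.3), which gives $<Av,v>=\nu\|v\|_V^2$ and hence $<\dot v,Av>=\tfrac{\nu}{2}\tfrac{d}{dt}\|v\|_V^2$ by self-adjointness of $A$, one obtains
\begin{equation*}
\frac{\nu}{2}\,\frac{d}{dt}\|v(t)\|_V^2 + |Av(t)|_H^2 = -Q(t)\,b\big(v(t),v(t),Av(t)\big).
\end{equation*}

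Step 2 (estimate of the trilinear term) is the crux. The estimates (2.8)--(2.11) are not sharp enough here: through (2.8)--(2.9) (with (2.6)) they control $b(v,v,Av)$ only by $|A^2v|_H$ or by $\|v\|_V|Av|_H^2$, and neither can be absorbed globally into the dissipation. Instead, writing $b(v,v,Av)=\sum_{i,j}\int_D v_i\,\pt_iv_j\,(Av)_j\,dx$, applying H\"older with exponents $4,4,2$, and then the two-dimensional Ladyzhenskaya inequality applied to $v$ and to $\nabla v$ together with elliptic regularity for the Stokes operator, $\|v\|_{H^2}\le c\,|Av|_H$ (here the smoothness of $\pt D$ enters), one gets
\begin{equation*}
|b(v,v,Av)| \le c\,\|v\|_{L^4}\,\|\nabla v\|_{L^4}\,|Av|_H \le c\,|v|_H^{1/2}\,\|v\|_V\,|Av|_H^{3/2}.
\end{equation*}
Young's inequality with exponents $4/3$ and $4$ then absorbs a term $\tfrac12|Av|_H^2$ into the dissipation, leaving
\begin{equation*}
\frac{d}{dt}\|v(t)\|_V^2 + \frac{1}{\nu}\,|Av(t)|_H^2 \le c\,Q^4(t)\,|v(t)|_H^2\,\|v(t)\|_V^4 .
\end{equation*}

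Step 3 (closing the estimate) uses Proposition \ref{prp:3.1} crucially. Dropping the dissipative term and writing the right-hand side as $\big(c\,Q^4(t)|v(t)|_H^2\|v(t)\|_V^2\big)\|v(t)\|_V^2$, divide by $\|v(t)\|_V^2$ (if $f=0$ the claim is trivial by uniqueness; otherwise replace $\|v\|_V^2$ by $\|v\|_V^2+\ep$, integrate, and let $\ep\downarrow 0$) and integrate in time to get
\begin{equation*}
\ln\|v(t)\|_V^2 \le \ln\|f\|_V^2 + c\Big(\sup_{0\le s\le T}Q^4(s)\Big)\Big(\sup_{0\le s\le T}|v(s)|_H^2\Big)\int_0^T\|v(s)\|_V^2\,ds .
\end{equation*}
Inserting the bounds (3.7) and (3.8) of Proposition \ref{prp:3.1}, namely $\sup_{[0,T]}|v(s)|_H^2\le|f|_H^2$ and $\int_0^T\|v(s)\|_V^2\,ds\le\frac{1}{2\nu}|f|_H^2$, yields $\sup_{0\le t\le T}\|v(t)\|_V^2\le\|f\|_V^2\exp\!\big(c\,|f|_H^4\sup_{[0,T]}Q^4\big)$. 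Finally, integrating the differential inequality of Step 2 over $[0,T]$ and bounding $\int_0^TQ^4|v|_H^2\|v\|_V^4\,ds$ by $\big(\sup_{[0,T]}Q^4\big)|f|_H^2\big(\sup_{[0,T]}\|v\|_V^2\big)\int_0^T\|v\|_V^2\,ds$ and using the bounds just obtained gives the same exponential control on $\nu\int_0^T|Av(s)|_H^2\,ds$; adding the two estimates and adjusting the generic constant $c$ gives \eqref{e0.1}. Passing from the $v_n$ to $v$ completes the argument.

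The main obstacle is twofold. First, in Step 2 one needs a bound on $<B(v),Av>$ with a \emph{sub-quadratic} power of $|Av|_H$; this is a genuinely two-dimensional phenomenon, relying on Ladyzhenskaya's inequality and $H^2$-regularity of the Stokes resolvent, and it is not covered by the listed estimates (2.8)--(2.11). Second, the differential inequality in Step 3 is superlinear in $\|v\|_V^2$ and would produce finite-time blow-up on its own; it is precisely the \emph{a priori} $L^2(0,T;V)$-bound from Proposition \ref{prp:3.1} that rescues it, which is what makes the logarithmic Gronwall step work.
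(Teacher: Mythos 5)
Your proposal is correct and follows essentially the same route as the paper: the $V$-energy identity obtained by pairing with $Av$, the key bound $|\langle B(v),Av\rangle|\le |v|_H^{1/2}\|v\|_V|Av|_H^{3/2}$ (which you rederive from Ladyzhenskaya's inequality but which the paper simply imports as $|B(v)|_H\le |v|_H^{1/2}\|v\|_V|Av|_H^{1/2}$ from Temam), Young's inequality to absorb $\nu|Av|_H^2$, and a Gronwall step closed by the a priori bounds (3.7)--(3.8). Your "logarithmic" form of Gronwall and the explicit Galerkin justification are only cosmetic departures from the paper's more direct application of Gronwall to $\|v(t)\|_V^2\le\|f\|_V^2+c|f|_H^2\int_0^t Q^4\|v\|_V^2\cdot\|v\|_V^2\,ds$.
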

\begin{proof}
By the chain rule, it follows that
\begin{eqnarray}\label{e0.2}
||v(t,f)||_V^2&=&||f||_V^2-2\nu \int_0^t|Av(s,f)|_H^2 \,ds\nonumber\\
&& \quad -2\int_0^t Q(s)<B(v(s,f)), Av(s,f)>\,ds.
\end{eqnarray}
Recall that (see Lemma 3.8 in \cite{Te1}) for $v\in H^2\cap V$,
\begin{equation}\label{e0.3}
|B(v)|_H\leq |v|_H^{\frac{1}{2}} ||v||_V |Av|_H^{\frac{1}{2}}.
\end{equation}
Thus
\begin{eqnarray}\label{e0.4}
2|Q(s)<B(v(s,f)),Av(s,f)>|
&\leq& 2Q(s)|v(s,f)|_H^{\frac{1}{2}} ||v(s,f)||_V |Av(s,f)|_H^{\frac{3}{2}}\nonumber\\
&\leq & \nu |Av(s,f)|_H^{2}+c Q^4(s)|v(s,f)|_H^{2} ||v(s,f)||_V^4\nonumber\\
&\leq & \nu |Av(s,f)|_H^{2}+c Q^4(s)|f|_H^{2} ||v(s,f)||_V^4,
\end{eqnarray}
where we have used (\ref{3.7}). By relations
(\ref{e0.2}) and (\ref{e0.4}), we obtain
\begin{eqnarray}\label{e0.5}
||v(t,f)||_V^2&\leq &||f||_V^2-\nu \int_0^t|Av(s,f)|_H^2ds\nonumber\\
&& \quad + c |f|_H^{2}\int_0^tQ^4(s)||v(s,f)||_V^2||v(s,f)||_V^2 ds.
\end{eqnarray}

Applying Gronwall's inequality to (\ref{e0.5}) and using (\ref{3.8}), we get
\begin{eqnarray}\label{e0.6}
\sup_{0\leq t\leq T}||v(t,f)||_V^2+\nu \int_0^T|Av(s,f)|_H^2\,ds
&\leq &||f||_V^2 \exp(c |f|_H^2\sup_{0\leq s\leq T}Q^4(s)\int_0^T||v(s,f)||_V^2ds)\nonumber\\
&\leq &||f||_V^2 \exp(c |f|_H^4\sup_{0\leq s\leq T}Q^4(s)).
\end{eqnarray}

\end{proof}

\begin{remark}
Set $\sigma:=\sqrt{\sum_{k=1}^{\infty}\sigma_k^2}$. Define
$$W(t):=\frac{1}{\sigma}\sum_{k=1}^{\infty}\sigma_kW_k(t), \quad t \geq 0.$$
Then $W(t), t\geq 0,$ is a new one-dimensional standard Brownian motion and
$$ \sum_{k=1}^{\infty}\sigma_ku(t,f)\circ dW_k(t)=\sigma u(t,f)\circ dW(t).$$
Thus, from now on and without loss of generality, we will assume that the SNSE (2.1) is driven by
one single Brownian motion $W$ and with $\sigma=1$. Hence $Q(t)=\exp(W(t)), \, t \geq 0$.
\end{remark}

\medskip

We next develop Malliavin derivatives for solutions of the random NSE (3.6). For the rest of the article we will denote Malliavin derivatives by $\D$.

\begin{proposition}
For each $f\in V$ and $t \geq 0$, the solution map
$$\Omega \ni \omega \rightarrow v(t,f,\o)\in H$$
of (3.6) is Malliavin differentiable. Its Malliavin  derivative $\D_uv(t,f)$, solves the following random evolution equation:
\begin{eqnarray}\label{e0.6}
\D_uv(t,f)\nonumber
&=& -\int_0^t A\D_uv(s,f)\, ds-\int_0^t Q(s)(\D_uv(s,f)\cdot \nabla )v(s,f)\,ds\nonumber\\
&&-\int_0^t Q(s)(v(s,f)\cdot \nabla )\D_uv(s,f) ds-\int_0^t \D_uQ(s)(v(s,f)\cdot \nabla )v(s,f)\, ds \nonumber\\
\end{eqnarray}
for all $t \in [0,T]$.
\end{proposition}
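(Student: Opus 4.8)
The plan is to obtain the Malliavin derivative of $v(t,f)$ via the standard scheme of Galerkin approximation, uniform a priori estimates, and passage to the limit, exploiting the fact (Proposition 3.2) that the solution map $f \mapsto v(t,f,\o)$ is already known to be Fr\'echet $C^{1,1}$ in $f$ for each fixed $\o$. First I would fix a finite-dimensional projection $P_n$ onto the span of the first $n$ eigenfunctions of the Stokes operator $A$ and write the Galerkin system for $v_n(t,f) = P_n v(t,f)$, an ODE in $\R^n$ with coefficients depending smoothly on the Brownian path through $Q(s) = \exp(W(s))$. Because $Q$ is Malliavin differentiable with $\D_u Q(s) = Q(s)\,\mathbf{1}_{[0,s]}(u)$ (a consequence of the chain rule applied to (3.4)), and because the finite-dimensional solution depends smoothly on the finitely many coefficients $\int_0^t Q(s)\,(\cdot)\,ds$ in the equation, each $v_n(t,f)$ lies in $\D^{1,p}$ for every $p$; differentiating the Galerkin ODE in the Malliavin sense and using the chain rule for $\D$ on the bilinear term gives that $\D_u v_n(t,f)$ solves the finite-dimensional analogue of (3.16), namely
\begin{equation}
\D_u v_n(t,f) = -\int_0^t A\D_u v_n\,ds - \int_0^t Q(s)P_n\big[(\D_u v_n\cdot\nabla)v_n + (v_n\cdot\nabla)\D_u v_n\big]ds - \int_0^t \D_u Q(s)\,P_n\big[(v_n\cdot\nabla)v_n\big]ds.
\end{equation}

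Next I would derive an a priori bound on $\D_u v_n$ that is uniform in $n$. Testing the equation above against $\D_u v_n$ and using the antisymmetry $b(u,w,w)=0$ from (\ref{antisymmetric}) to kill the term $\int_0^t Q(s)\,b(v_n,\D_u v_n,\D_u v_n)\,ds$, one is left with
\begin{equation}
\frac{d}{dt}|\D_u v_n(t,f)|_H^2 + 2\nu\|\D_u v_n(t,f)\|_V^2 = -2Q(t)\,b(\D_u v_n,v_n,\D_u v_n) - 2\,\D_u Q(t)\,b(v_n,v_n,\D_u v_n).
\end{equation}
The first term on the right I would control with estimate (2.10), $|b(\D_u v_n,v_n,\D_u v_n)| \le 2\|\D_u v_n\|_V\,|\D_u v_n|_H\,\|v_n\|_V$ (using $\|\D_u v_n\|_V^{1/2}|\D_u v_n|_H^{1/2}$ twice after Young), absorbing the $\nu\|\D_u v_n\|_V^2$ part into the left and leaving a term of the form $c\,Q(t)^2\|v_n\|_V^2\,|\D_u v_n|_H^2$; the second term, since $\D_u Q(s)=Q(s)\mathbf 1_{[0,s]}(u)$ and $|b(v_n,v_n,\D_u v_n)|\le c\|v_n\|_V^{3/2}|v_n|_H^{1/2}\|\D_u v_n\|_V^{1/2}|\D_u v_n|_H^{1/2}$, gives after Young a term bounded by $\tfrac{\nu}{2}\|\D_u v_n\|_V^2 + c\,Q(t)^{4}\|v_n\|_V^{6}|v_n|_H^{2}$. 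Integrating and applying Gronwall, with the Gronwall exponent $\int_0^T c\,Q(s)^2\|v_n(s,f)\|_V^2\,ds$ controlled by Proposition 3.3 (which bounds $\sup_s\|v_n(s)\|_V^2$ and $\int_0^T|Av_n|_H^2$) and by $E\|Q\|_\infty^p<\infty$, I obtain $\sup_{0\le t\le T}|\D_u v_n(t,f)|_H^2 + \nu\int_0^T\|\D_u v_n(s,f)\|_V^2\,ds \le C(\o)$ uniformly in $n$, and one checks $C(\o)$ has enough integrability (fourth moments, say) using the exponential moments of $Q$. These bounds, together with the existing uniform bounds on $v_n$, are exactly what is needed to conclude $v_n(t,f)\to v(t,f)$ in $\D^{1,2}$: the sequence $\{v_n(t,f)\}$ converges in $L^2(\O;H)$ and $\{\D v_n(t,f)\}$ is bounded in $L^2(\O;L^2([0,T];H))$, so by the closedness of the Malliavin derivative operator $\D$ the limit $v(t,f)$ is Malliavin differentiable and $\D v_n(t,f)\rightharpoonup \D v(t,f)$ weakly.

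Finally I would pass to the limit in the equation for $\D_u v_n$ to identify the limit as the solution of (3.16). The linear terms $\int_0^t A\D_u v_n\,ds$ and the terms linear in $\D_u v_n$ pass to the limit by weak convergence in $L^2([0,T];V)$ against test functions; the genuinely bilinear terms require the strong convergence $v_n\to v$ in $L^2([0,T];V)$ (available from the standard NSE compactness argument, or from the known regularity of $v$) combined with the $\D v_n$ estimates, using the trilinear estimates (2.7)--(2.10) to show the products converge; and the inhomogeneous term $\int_0^t\D_u Q(s)(v_n\cdot\nabla)v_n\,ds$ converges because $\D_u Q(s)=Q(s)\mathbf 1_{[0,s]}(u)$ is a fixed bounded factor and $(v_n\cdot\nabla)v_n\to(v\cdot\nabla)v$ in, say, $L^2([0,T];V')$. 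The main obstacle I anticipate is the a priori estimate for $\D v_n$ in the $V$-norm in time (the $\int_0^T\|\D_u v_n\|_V^2$ bound): the inhomogeneous term driven by $\D_u Q \cdot B(v_n)$ is the delicate one, since $B(v_n)$ is only in $V'$ a priori, so one must either use the better regularity $v\in L^2([0,T];D(A))$ from Proposition 3.3 to make sense of $B(v_n)$ in $H$, or test more carefully; ensuring the resulting random constant $C(\o)$ has the fourth moments claimed in the statement is where the exponential integrability of $Q$, i.e. $E\|Q\|_\infty^p<\infty$ for all $p$, must be used with care.
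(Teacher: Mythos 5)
Your overall skeleton (Galerkin approximation, Malliavin differentiation of the finite-dimensional systems, energy/Gronwall estimates with the antisymmetry $b(u,w,w)=0$ killing the dangerous term, then a closedness property of $\D$) matches the paper's, but there is a genuine gap at the integrability step, and it is precisely the point the paper's localization is designed to handle. Your Gronwall bound for $\D_u v_n$ necessarily carries a factor of the form $\exp\big(c\,\|Q\|_{\infty}^2\int_0^T\|v_n(s,f)\|_V^2\,ds\big)\le\exp\big(c_{\nu}\,\|Q\|_{\infty}^2\,|f|_H^2\big)$, and since $\|Q\|_{\infty}=\exp\big(\sup_{0\le s\le T}W(s)\big)$, the random variable $\exp\big(c\,\|Q\|_{\infty}^2|f|_H^2\big)$ has \emph{no} finite moments of any positive order (it is a double exponential of a variable with Gaussian tails). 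So your claim that the random constant $C(\o)$ has fourth moments ``using the exponential moments of $Q$'' is false, and without $\sup_n E\|\D v_n\|^2<\infty$ you cannot invoke the closedness/weak-compactness property of $\D$ (Nualart's lemma requires exactly this uniform bound). The paper resolves this by proving only \emph{local} Malliavin differentiability: on $\Omega_N=\{\sup_{0\le s\le T}|W(s)|\le N\}$ one has $v=v^N$, where $v^N$ solves the equation with $Q$ replaced by the bounded $Q_N=\exp(W\wedge N)$, and all estimates are carried out for $Q_N$; this truncation (or an equivalent one) is indispensable and is missing from your proposal.

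Beyond that, your identification of the limit takes a different and less economical route than the paper's and leaves loose ends: extracting only weak limits of $\D_u v_n$ and passing to the limit in the equation requires strong convergence $v_n\to v$ in $L^2([0,T];V)$, a uniqueness statement for the limiting linear equation (to upgrade subsequential weak limits to convergence of the whole sequence and to get a jointly measurable version in $(u,\o)$), and care with products of weakly and strongly convergent factors. The paper instead first constructs the candidate derivative $Y_u(t,f)$ as the solution of the limiting linear equation and proves the \emph{strong} convergence $\sup_{0\le u\le t}E\big[|\D_u v_n(t,f_n)-Y_u(t,f)|_H^2\big]\to 0$ directly, via a Gronwall estimate on the difference equation (the seven terms $I_1^n,\dots,I_7^n$, with $I_7^n=0$ by antisymmetry); this needs only the strong $H$-convergence of $v_n$ together with pathwise uniform $V$-bounds, which is why the proposition assumes $f\in V$ and uses Proposition 3.3. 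Two minor points: your $v_n(t,f)=P_nv(t,f)$ conflates the Galerkin approximant (the solution of the projected system with data $P_nf$) with the projection of the true solution, and the paper's Galerkin data are $f_n=P_nf$, not $f$.
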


\begin{proof}
We will prove $v(t,f)\in \D^{1,2}_{loc}(H)$. By the uniqueness of the solution of the random NSE (\ref{eq:3.6}), we have
$v(t,f)=v^N(t,f)$ on $\Omega_N=\{ \sup_{0\leq s\leq T}|W(s)|\leq N\}$, where $v^N(t,f)$ is the solution of an equation similar to (\ref{eq:3.6}) replacing $Q(s)$ there by $Q_N(s)=\exp(W(s)\wedge N)$. Thus it is sufficient to prove $v^N(t,f)\in \D^{1,2}(H)$ for every fixed $N$.  For this reason, we assume implicitly in the proof that $Q=Q_N$.
To continue, we appeal to the Galerkin approximations. Let $\{ e_i\}_{i=1}^\i$ be a complete orthonormal
basis of $H$ that consists of eigenvectors of the operator $-A$ under Dirichlet boundary conditions with corresponding eigenvalues $\{\mu_i\}_{i=1}^\i$\,; that is $A(e_i) = -\mu_ie_i$, $e_i|_{\pt D} = 0, \,\, i \ge 1$. Let
$H_n$ denote the $n$-dimensional subspace of $H$ spanned by $\{e_1, e_2, ...,e_n\}$. Define
$f_n \in H_n$ by
\[
  f_n := \sum^n_{j=1} \langle f,e_j \rangle e_j\,.
\]
Clearly, the sequence $\{f_n\}_{n=1}^\i$ converges to $f$ in $H$.
Now for every integer $n \geq 1$,  let $v_n$ be unique solution of  the
random NSE
\begin{equation}\label{e0.7}
\left. \begin{aligned}
  dv_n(t,f_n) &= -Av_n(t,f_n)\,dt - Q(t)B\big(v_n(t,f_n)\big)\,dt, \quad t > 0, \\
  v_n(0,f_n) &= f_n, \\
\ v_n(t,f_n) \mid_{\pt D} &= 0, \quad t > 0,
\end{aligned} \right\}
\end{equation}
such that
\[
  v_n(t, f_n) := \sum^n_{j=1} g_{jn}(t) e_j, \quad t \geq 0,
\]
for appropriate choice of the real-valued random processes $g_{jn}$. It was shown in \cite{M-Z} that $v_n$ converges to $v$ and
\begin{equation}\label{e0.8}
\lim_{n\rightarrow \infty}E[\int_0^T|v_n(s,f_n)-v(s,f)|_H^2ds]=0.
\end{equation}
As $v_n(t,f_n)$ is a  solution of the finite dimensional random ordinary differential equation (\ref{e0.7}), it is known (see e.g. \cite{N}) that $v_n(t,f_n)$ is Malliavin differentiable and the corresponding Malliavin derivative $\D_uv_n(t,f_n)$ satisfies the following random ODE:
\begin{eqnarray}\label{e0.9}
\D_uv_n(t,f_n)
&=& -\int_0^t A\D_uv_n(s,f_n) ds-\int_0^t Q(s)(\D_uv_n(s,f_n)\cdot \nabla )v_n(s,f_n)ds\nonumber\\
&&-\int_0^t Q(s)(v_n(s,f_n)\cdot \nabla )\D_uv_n(s,f_n) ds-\int_0^t \D_uQ(s)(v_n(s,f_n)\cdot \nabla )v_n(s,f_n) ds \nonumber\\
&& \quad\quad \quad  \quad
\end{eqnarray}
for all $t \in [0,T]$.
Let $Y_u(t,f)$ be the solution of the following random evolution equation:
\begin{eqnarray}\label{e0.10}
Y_u(t,f)
&=& -\int_0^t AY_u(s,f) ds-\int_0^t Q(s)(Y_u(s,f)\cdot \nabla )v(s,f)ds\nonumber\\
&&-\int_0^t Q(s)(v(s,f)\cdot \nabla )Y_u(s,f) ds-\int_0^t D_uQ(s)(v(s,f)\cdot \nabla )v(s,f) ds 
\end{eqnarray}
for all $t \in [0,T]$.
The existence of the solution of the above equation can be obtained by a similar method to the one used for (\ref{eq:3.6}) (see \cite{M-Z}). Since the Malliavin derivative operator $\D$ is closed, to prove the theorem it suffices to show that
\begin{equation}\label{e0.11}
\lim_{n\rightarrow \infty}\sup_{0\leq u\leq t}E[|\D_uv_n(t,f_n)-Y_u(t,f) |_H^2]=0.
\end{equation}
Now,
\begin{eqnarray}\label{e0.12}
&&|\D_uv_n(t,f_n)-Y_u(t,f) |_H^2\nonumber\\
&=& -2\nu \int_0^t ||\D_uv_n(s,f_n)-Y_u(s,f)||_V^2 ds\nonumber\\
&&-2\int_0^t \D_uQ(s)b(v_n(s,f_n), v_n(s,f_n)-v(s,f), \D_uv_n(s,f_n)-Y_u(s,f))ds\nonumber\\
&&-2\int_0^t \D_uQ(s)b(v_n(s,f_n)-v(s,f), v(s,f), \D_uv_n(s,f_n)-Y_u(s,f))ds\nonumber\\
&&-2\int_0^t Q(s)b(\D_uv_n(s,f_n), v_n(s,f_n)-v(s,f), \D_uv_n(s,f_n)-Y_u(s,f))ds\nonumber\\
&&-2\int_0^t Q(s)b(\D_uv_n(s,f_n)-Y_u(s,f), v(s,f), \D_uv_n(s,f_n)-Y_u(s,f))ds\nonumber\\
&&-2\int_0^t Q(s)b(v_n(s,f_n)-v(s,f), \D_uv_n(s,f_n), \D_uv_n(s,f_n)-Y_u(s,f))ds\nonumber\\
&&-2\int_0^t Q(s)b(v(s,f), \D_uv_n(s,f_n)-Y_u(s,f), \D_uv_n(s,f_n)-Y_u(s,f))ds \nonumber\\
&:=&I^n_1+I^n_2+I^n_3+I^n_4+I^n_5+I^n_6+I^n_7 \quad t \in [0,T].
\end{eqnarray}
Set
$$C^n_1(\omega)=\sup_{0\leq s\leq T}|v_n(s,f_n)|_H,  \quad C^n_2(\omega)=\sup_{0\leq s\leq T}||v_n(s,f_n)||_V$$
$$C_1(\omega)=\sup_{0\leq s\leq T}|v(s,f)|_H, \quad C_2(\omega)=\sup_{0\leq s\leq T}||v(s,f)||_V$$
$$M^n_1(u, \omega)=\sup_{0\leq s\leq T}|\D_uv_n(s,f_n)|_H,  M^n_2(u, \omega)=\sup_{0\leq s\leq T}||\D_uv_n(s,f_n)||_V$$
$$M_1(u, \omega)=\sup_{0\leq s\leq T}|Y_u(s,f)|_H, M_2(u, \omega)=\sup_{0\leq s\leq T}||Y_u(s,f)||_V$$
Now we estimate each of the terms on the right of (\ref{e0.12}). We start with
\begin{eqnarray}\label{e0.13}
I^n_2
&\leq & c \sup_{0\leq s\leq T}|\D_uQ(s)| \int_0^t |v_n(s,f_n)|_H^{\frac{1}{2}}||v_n(s,f_n)||_V^{\frac{1}{2}} |v_n(s,f_n)-v(s,f)|_H^{\frac{1}{2}} \nonumber\\
&&\quad\quad\quad \times ||v_n(s,f_n)-v(s,f)||_V^{\frac{1}{2}}||\D_uv_n(s,f_n)-Y_u(s,f))||_Vds\nonumber\\
&\leq & c \sup_{0\leq s\leq T}|\D_uQ(s)| (C^n_1(\omega)+C_1(\omega))(C^n_2(\omega)+C_2(\omega))(\int_0^T |v_n(s,f_n)-v(s,f)|_H^2ds)^{\frac{1}{4}}\nonumber\\
&&\quad\quad\quad \times (\int_0^T ||\D_uv_n(s,f_n)-Y_u(s,f))||_V^{\frac{4}{3}}ds)^{\frac{3}{4}}
\end{eqnarray}
and
\begin{eqnarray}\label{e0.14}
I^n_3&\leq & c \sup_{0\leq s\leq T}|\D_uQ(s)| (C^n_1(\omega)+C_1(\omega))(C^n_2(\omega)+C_2(\omega))(\int_0^T |v_n(s,f_n)-v(s,f)|_H^2ds)^{\frac{1}{4}}\nonumber\\
&&\quad\quad\quad \times (\int_0^T ||\D_uv_n(s,f_n)-Y_u(s,f))||_V^{\frac{4}{3}}ds)^{\frac{3}{4}}
\end{eqnarray}
For $I^n_4$, we have
\begin{eqnarray}\label{e0.15}
I^n_4
&\leq & c \sup_{0\leq s\leq T}|Q(s)| \int_0^t |\D_uv_n(s,f_n)|_H^{\frac{1}{2}}||\D_uv_n(s,f_n)||_V^{\frac{1}{2}} |v_n(s,f_n)-v(s,f)|_H^{\frac{1}{2}}\nonumber\\
&&\quad \quad \quad  ||v_n(s,f_n)-v(s,f)||_V^{\frac{1}{2}}||\D_uv_n(s,f_n)-Y_u(s,f))||_Vds\nonumber\\
&\leq & c ||Q||_{\infty} [M^n_1(u, \omega)M^n_2(u, \omega)(C^n_2(\omega)+C_2(\omega))]^{\frac{1}{2}}(\int_0^T |v_n(s,f_n)-v(s,f)|_H^2ds)^{\frac{1}{4}}\nonumber\\
&&\quad\quad\quad \times (\int_0^T ||\D_uv_n(s,f_n)-Y_u(s,f))||_V^{\frac{4}{3}}ds)^{\frac{3}{4}}
\end{eqnarray}
The term $I^n_5$ can be bounded as follows:
\begin{eqnarray}\label{e0.16}
I^n_5
&\leq & c \sup_{0\leq s\leq T}|Q(s)| \int_0^t ||v(s,f)||_V ||\D_uv_n(s,f_n)-Y_u(s,f))||_V||\D_uv_n(s,f_n)-Y_u(s,f))||_H ds\nonumber\\
&\leq & +\frac{\nu}{2}\int_0^t||\D_uv_n(s,f_n)-Y_u(s,f))||_V^2ds\nonumber\\
  && +c_{\nu} ||Q||_{\infty}^2 \int_0^t ||v(s,f)||_V^2|\D_uv_n(s,f_n)-Y_u(s,f))|_H^2ds
\end{eqnarray}
Now,
\begin{eqnarray}\label{e0.17}
I^n_6
&\leq & c \sup_{0\leq s\leq T}|Q(s)| \int_0^t ||\D_uv_n(s,f_n)||_V |v_n(s,f_n)-v(s,f)|_H^{\frac{1}{2}} ||v_n(s,f_n)-v(s,f)||_V^{\frac{1}{2}}\nonumber\\
&&\quad\quad\quad ||\D_uv_n(s,f_n)-Y_u(s,f))||_V^{\frac{1}{2}}|\D_uv_n(s,f_n)-Y_u(s,f))|_H^{\frac{1}{2}} ds\nonumber\\
&\leq & c ||Q||_{\infty} M^n_2(u, \omega)[(M^n_1(u, \omega)+M_1(u, \omega))(C^n_2(u, \omega)+C_2(\omega))]^{\frac{1}{2}}(\int_0^T |v_n(s,f_n)-v(s,f)|_H^2ds)^{\frac{1}{4}}\nonumber\\
&&\quad\quad\quad \times (\int_0^T ||\D_uv_n(s,f_n)-Y_u(s,f))||_V^{\frac{4}{3}}ds)^{\frac{3}{4}}
\end{eqnarray}
Finally, we note that $I^n_7=0$ because $b(u,v,v)=0$ for all $u,v \in V$.

Substituting (\ref{e0.13})--(\ref{e0.17}) into (\ref{e0.12}) and applying Gronwall's inequality  we obtain
\begin{eqnarray}\label{e0.19}
\sup_{u\leq t\leq T}|\D_uv_n(t,f_n)-Y_u(t,f) |_H^2&+\nu \int_0^T ||\D_uv_n(s,f_n)-Y_u(s,f)||_V^2 ds\nonumber\\
&\leq L_n(\omega) \exp\bigg ( c_{\nu} ||Q||_{\infty}^2 \int_0^T ||v(s,f)||_V^2ds\bigg )\nonumber\\
&\leq L_n(\omega) \exp\bigg ( c_{\nu} ||Q||_{\infty}^2 |f|_H^2\bigg )
\end{eqnarray}
where $L_n(\omega)$ is the sum of the right sides of (\ref{e0.13}), (\ref{e0.14}), (\ref{e0.15}), (\ref{e0.17}).
By the dominated convergence theorem, we deduce that
 \begin{eqnarray}\label{e0.20}
&&E[\sup_{u\leq t\leq T}|\D_uv_n(t,f_n)-Y_u(t,f) |_H^2]+\nu E[\int_0^T ||\D_uv_n(s,f_n)-Y_u(s,f)||_V^2 ds]\nonumber\\
&& \rightarrow 0, \quad \mbox{as} \quad n\rightarrow \infty,
\end{eqnarray}
where Proposition 3.3 has been used.
\end{proof}

\begin{proposition}
For any $f\in V$, the Malliavin  derivative $\D_uv(t,f)$ of $v(t,f)$ satisfies the following estimate:
\begin{eqnarray}\label{e0.32}
\sup_{u\leq t\leq T}|\D_uv(t,f)|_H^2]+\nu E[\int_0^T ||\D_uv(s,f)||_V^2 ds]
\leq C_{\nu} ||\D_uQ||_{\infty}^2 |f|_H^4 \exp(C||Q||_{\infty}^2 |f|_H^2).
\end{eqnarray}
\end{proposition}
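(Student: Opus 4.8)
The plan is to convert the linear evolution equation for $\D_u v(t,f)$ established in Proposition 3.5 into an energy identity in $H$ and then close it by Gronwall's lemma, in exactly the spirit of the proof of Proposition 3.4. First I would record two preliminary observations. Since $f$ is deterministic, $\D_u f = 0$, so $\D_u v$ starts from $0$; and because $Q(s)=\exp(W(s))$ we have $\D_u Q(s)=Q(s)\mathbf 1_{\{s\ge u\}}$, so the inhomogeneous term in the equation for $\D_u v$ is supported on $\{s\ge u\}$ and, by uniqueness, $\D_u v(t,f)=0$ for $t\le u$ — this is why the supremum in the statement runs over $u\le t\le T$. Fixing $u$ and $\o$ and using that $\D_u v(\cdot,f)\in C([u,T],H)\cap L^2([u,T],V)$ with distributional time-derivative in $L^2([u,T],V')$ (each term on the right of the equation of Proposition 3.5 lies in $V'$ by (2.4) and (2.11), exactly as in the existence argument for (3.6)), the Lions--Magenes chain rule for $|\cdot|_H^2$ gives, for $t\in[u,T]$,
$$|\D_u v(t,f)|_H^2 = -2\nu\int_u^t \|\D_u v(s,f)\|_V^2\,ds - 2\int_u^t Q(s)\,b\big(\D_u v(s,f),v(s,f),\D_u v(s,f)\big)\,ds - 2\int_u^t \D_u Q(s)\,b\big(v(s,f),v(s,f),\D_u v(s,f)\big)\,ds,$$
where the transport contribution $b(v,\D_u v,\D_u v)$ has been dropped because $b(w,z,z)=0$ for $w,z\in V$.

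Next I would bound the two surviving trilinear terms using the interpolation inequality (2.10). For the first, with the two outer arguments equal to $\D_u v$ and the middle one $v$, one gets $|b(\D_u v,v,\D_u v)|\le 2\|\D_u v\|_V|\D_u v|_H\|v\|_V$; for the second, first using the antisymmetry (2.6) to write $b(v,v,\D_u v)=-b(v,\D_u v,v)$ and then (2.10), one gets $|b(v,v,\D_u v)|\le 2\|v\|_V|v|_H\|\D_u v\|_V$. Substituting these, estimating $|Q(s)|\le\|Q\|_\infty$ and $|\D_u Q(s)|\le\|\D_u Q\|_\infty$, and applying Young's inequality to absorb two copies of $\tfrac{\nu}{2}\|\D_u v(s,f)\|_V^2$ into the left-hand side, I obtain
$$|\D_u v(t,f)|_H^2 + \nu\int_u^t \|\D_u v(s,f)\|_V^2\,ds \le c_\nu\|Q\|_\infty^2\int_u^t \|v(s,f)\|_V^2\,|\D_u v(s,f)|_H^2\,ds + c_\nu\|\D_u Q\|_\infty^2\int_u^t \|v(s,f)\|_V^2\,|v(s,f)|_H^2\,ds.$$
By (3.7) and (3.8) the last term is at most $c_\nu\|\D_u Q\|_\infty^2|f|_H^2\int_0^T\|v(s,f)\|_V^2\,ds\le C_\nu\|\D_u Q\|_\infty^2|f|_H^4$, and since $s\mapsto c_\nu\|Q\|_\infty^2\|v(s,f)\|_V^2$ is integrable on $[0,T]$ (again by (3.8)), Gronwall's lemma yields
$$\sup_{u\le t\le T}|\D_u v(t,f)|_H^2 \le C_\nu\|\D_u Q\|_\infty^2|f|_H^4\exp\Big(c_\nu\|Q\|_\infty^2\int_0^T\|v(s,f)\|_V^2\,ds\Big) \le C_\nu\|\D_u Q\|_\infty^2|f|_H^4\exp\big(C\|Q\|_\infty^2|f|_H^2\big).$$
Reinserting this bound into the previous displayed inequality controls $\nu\int_0^T\|\D_u v(s,f)\|_V^2\,ds$ by the same right-hand side, which is the claimed estimate (holding for each $\o$, hence for a.e. $u$ and a.e. $\o$).

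As for where the difficulty sits: the algebra is short, and the only genuine inputs are the cancellation $b(v,\D_u v,\D_u v)=0$, the inequality (2.10), and the already-proved uniform bounds (3.7)--(3.8) for $v$ itself. The one point that needs care is the \emph{justification} of the energy identity for the Malliavin-derivative process — one must know a priori that, for a.e.\ $u$ and a.e.\ $\o$, $\D_u v(\cdot,f)$ is a bona fide weak solution of the linear equation of Proposition 3.5 in the class $C([u,T],H)\cap L^2([u,T],V)$ with time-derivative in $L^2([u,T],V')$, so that the chain rule for $|\cdot|_H^2$ applies; this is precisely what the construction of that solution (modeled on the one for (3.6)) provides. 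Note that no truncation of $Q$ is needed at this stage, since the estimate is pathwise and $\|Q\|_\infty$ and $\|\D_u Q\|_\infty$ are finite for every $\o$.
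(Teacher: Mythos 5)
Your proposal is correct and follows essentially the same route as the paper: the energy identity for $|\D_u v(t,f)|_H^2$, the cancellation of the transport term $b(v,\D_u v,\D_u v)$ via (2.6), the interpolation bound (2.10) on the two surviving trilinear terms, Young's inequality to absorb $\tfrac{\nu}{2}\|\D_u v\|_V^2$, and then (3.7)--(3.8) together with Gronwall. The extra care you take in justifying the energy identity and in noting that $\D_u v$ vanishes on $[0,u]$ is a welcome refinement but does not change the argument.
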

\begin{proof}
By chain rule,
 \begin{eqnarray}\label{e0.33}
|\D_uv(t,f)|_H^2
&=& -2\nu \int_0^t ||\D_uv(s,f)||_V^2 ds\nonumber\\
&&-2\int_0^t Q(s)b(\D_uv(s,f), v(s,f), \D_uv(s,f))ds\nonumber\\
&&-2\int_0^t Q(s)b(v(s,f), \D_uv(s,f), \D_uv(s,f))ds\nonumber\\
&&-2\int_0^t \D_uQ(s)b(v(s,f), v(s,f), \D_uv(s,f))ds\nonumber\\
&:=&K_1+K_2+K_3+K_4 \quad t \in [0,T].
\end{eqnarray}
In view of (\ref{antisymmetric}), $K_3=0$. For the other terms $K_2, K_4$, the following estimates hold:
 \begin{eqnarray}\label{e0.34}
K_2
&\leq &c ||Q||_{\infty}\int_0^t ||\D_uv(s,f)||_V ||v(s,f)||_V |\D_uv(s,f)|_Hds\nonumber\\
&\leq &\frac{\nu}{2}\int_0^t ||\D_uv(s,f)||_V^2 ds+c_{\nu}||Q||_{\infty}^2\int_0^t  ||v(s,f)||_V^2 |\D_uv(s,f)|_H^2ds
\end{eqnarray}
and
\begin{eqnarray}\label{e0.35}
K_4
&\leq &c ||\D_uQ||_{\infty}\int_0^t ||\D_uv(s,f)||_V ||v(s,f)||_V |v(s,f)|_Hds\nonumber\\
&\leq &\frac{\nu}{2}\int_0^t ||\D_uv(s,f)||_V^2 ds+c_{\nu}||\D_uQ||_{\infty}^2\int_0^t  ||v(s,f)||_V^2 |v(s,f)|_H^2ds\nonumber\\
&\leq& \frac{\nu}{2}\int_0^t ||\D_uv(s,f)||_V^2 ds+c_{\nu}||\D_uQ||_{\infty}^2|f|_H^4,
\end{eqnarray}
where (\ref{3.7}), (\ref{3.8}) were used.
Now (\ref{e0.32}) follows from (\ref{e0.34}) and (\ref{e0.35}).

\end{proof}

\begin{theorem}
For each $f\in H$, the solution map
$$\Omega \ni \omega \rightarrow v(t,f,\o)\in H$$
of the random NSE (3.6) is Malliavin differentiable. Its Malliavin  derivative $\D_uv(t,f)$ solves the following random evolution equation:
\newpage
\begin{eqnarray}\label{e0.21}
\D_uv(t,f)\nonumber
&=& -\int_0^t A\D_uv(s,f) ds-\int_0^t Q(s)(\D_uv(s,f)\cdot \nabla )v(s,f)ds\nonumber\\
&&-\int_0^t Q(s)(v(s,f)\cdot \nabla )\D_uv(s,f) ds-\int_0^t \D_uQ(s)(v(s,f)\cdot \nabla )v(s,f) ds,\nonumber\\
\end{eqnarray}
for all $t \in [0,T]$.
\end{theorem}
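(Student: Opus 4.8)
The plan is to bootstrap from Proposition 3.5 (which handles $f \in V$) to the general case $f \in H$ by a density and closedness argument. First I would fix $f \in H$ and choose a sequence $f_n \in V$ (for instance the Galerkin truncations $f_n = \sum_{j=1}^n \langle f, e_j\rangle e_j$, which already lie in $D(A) \subset V$) with $f_n \to f$ in $H$. By Proposition 3.5 each $v(t,f_n,\o)$ is Malliavin differentiable and $\D_u v(t,f_n)$ solves equation (\ref{e0.21}) with $f$ replaced by $f_n$; by Proposition 3.6 we have the a priori bound
\begin{equation*}
\sup_{u \le t \le T} |\D_u v(t,f_n)|_H^2 + \nu \int_0^T \|\D_u v(s,f_n)\|_V^2\,ds \le C_\nu \|\D_u Q\|_\infty^2 |f_n|_H^4 \exp\big(C \|Q\|_\infty^2 |f_n|_H^2\big),
\end{equation*}
and since $|f_n|_H \le |f|_H$ this bound is uniform in $n$ (pathwise in $\o$, and with finite expectation after using $E\|Q\|_\infty^q < \i$ for all $q$, as follows from (3.4)).

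The second step is to pass to the limit. From Proposition 3.1 and the Lipschitz-on-bounded-sets property of the solution map, $v(\cdot,f_n,\o) \to v(\cdot,f,\o)$ in $C([0,T],H) \cap L^2([0,T],V)$ for each $\o$, and with appropriate uniform integrability also in the relevant $L^2(\O;\cdot)$ spaces. I would then let $Y_u(t,f)$ denote the solution of the linear random evolution equation (\ref{e0.21}); its existence for $f \in H$ follows by the same fixed-point/energy argument as in \cite{M-Z}, using that $v(\cdot,f) \in C([0,T],H)\cap L^2([0,T],V)$ provides coefficients of the right integrability (the term $\D_u Q(s)(v(s,f)\cdot\nabla)v(s,f)$ is the forcing, and $|B(v(s,f))|_{V'} \le 2\|v(s,f)\|_V |v(s,f)|_H$ by (\ref{2.9}) is in $L^2([0,T],V')$). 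Then I would show $\D_u v(t,f_n) \to Y_u(t,f)$ in $L^2(\O; H)$ uniformly in $u \le t$: subtract the two equations (\ref{e0.21}), apply the chain rule to $|\D_u v(t,f_n) - Y_u(t,f)|_H^2$, and estimate the resulting trilinear terms exactly as in the proof of Proposition 3.5 — the only differences are that $f_n, v(\cdot,f_n)$ play the roles there played by $f, v(\cdot,f)$ and that the "target" equation now has coefficient $v(\cdot,f)$ rather than a Galerkin one. The inequalities (\ref{2.7})--(\ref{2.10}), antisymmetry (\ref{antisymmetric}) (killing the analogue of $I_7^n$), Gronwall, and the uniform bound from Proposition 3.6 give
\begin{equation*}
E\Big[\sup_{u \le t \le T} |\D_u v(t,f_n) - Y_u(t,f)|_H^2\Big] + \nu\, E\Big[\int_0^T \|\D_u v(s,f_n) - Y_u(s,f)\|_V^2\,ds\Big] \to 0.
\end{equation*}

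Finally, since $v(t,f_n) \to v(t,f)$ in $L^2(\O;H)$ and $\D_u v(t,f_n) \to Y_u(t,f)$ in $L^2(\O \times [0,T]; H)$, closedness of the Malliavin derivative operator $\D$ (on $\D^{1,2}(H)$) yields $v(t,f) \in \D^{1,2}(H)$ with $\D_u v(t,f) = Y_u(t,f)$, i.e. $\D_u v(t,f)$ solves (\ref{e0.21}); as before one may first localize on $\O_N = \{\sup_{0\le s\le T}|W(s)| \le N\}$ with $Q = Q_N$ to work in $\D^{1,2}$ rather than $\D^{1,2}_{loc}$, then remove the localization. The main obstacle I anticipate is controlling the convergence of the trilinear terms involving $\nabla v(\cdot,f_n)$ and $\nabla Y_u$ in the low-regularity setting $f \in H$ (where, unlike Proposition 3.5, we no longer have the $V$-bound (\ref{e0.1}) on $v$ itself and must rely solely on the $L^2([0,T],V)$-integrability of $v(\cdot,f)$): one must be careful to distribute derivatives so that every product is estimated by (\ref{2.10}) with the $\|\cdot\|_V$-factors placed on the differences, absorbing half of the $\nu\|\cdot\|_V^2$ on the left, so that Gronwall closes — this is precisely the bookkeeping already done for $I_2^n$ through $I_6^n$ in Proposition 3.5, and the same scheme should go through.
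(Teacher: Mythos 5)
Your proposal is correct and follows essentially the same route as the paper: approximate $f\in H$ by $f_n\in V$, invoke the $f\in V$ case for $\D_u v(t,f_n)$, define the candidate limit $Z_u(t,f)$ as the solution of the limiting linear random evolution equation, prove convergence by subtracting the two equations and estimating the resulting trilinear terms with (2.7)--(2.10), antisymmetry, and Gronwall (after localizing with $Q=Q_N$), and conclude by closedness of $\D$. The only differences are cosmetic (your labels for the auxiliary propositions are shifted by one, and you name the limit $Y_u$ where the paper uses $Z_u$).
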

\begin{proof}
Again, as in the proof of Proposition 3.4, we will implicitly assume $Q=Q_N$.
Take $f_n\in V, n\geq 1$ such that $f_n\rightarrow f$ in $H$ as $n\rightarrow \infty$. By Proposition 3.4 , we know that
  $v(t,f_n,\o)\in H$
is Malliavin differentiable. The Malliavin  derivative $\D_uv(t,f_n)$ solves the following random evolution equation:
\begin{eqnarray}\label{e0.21}
\D_uv(t,f_n)
&&=-\int_0^t A\D_uv(s,f_n) ds-\int_0^t Q(s)(\D_uv(s,f_n)\cdot \nabla )v(s,f_n)ds\nonumber\\
&&-\int_0^t Q(s)(v(s,f_n)\cdot \nabla )\D_uv(s,f_n) ds-\int_0^t \D_uQ(s)(v(s,f_n)\cdot \nabla )v(s,f_n) ds,\nonumber\\
\end{eqnarray}
for all $t \in [0,T]$.
On the other hand, it follows from Proposition 3.1 that
$$\lim_{n\rightarrow \infty}E[|v(t,f_n)-v(t,f)|_H^p]=0$$
$$\lim_{n\rightarrow \infty}E\bigg (\int_0^T||v(t,f_n)-v(t,f)||_V^2dt \bigg )^p=0$$
for any $p>0$. Thus it suffices to show that the Malliavin derivatives $\D_uv(t, f_n), n\geq 1,$ converge. Let $Z_u(t,f)$ be the solution of the following random evolution equation:
\begin{eqnarray}\label{e0.22}
Z_u(t,f)
&=& -\int_0^t AZ_u(s,f) ds-\int_0^t Q(s)(Z_u(s,f)\cdot \nabla )v(s,f)ds\nonumber\\
&&-\int_0^t Q(s)(v(s,f)\cdot \nabla )Z_u(s,f) ds-\int_0^t \D_uQ(s)(v(s,f)\cdot \nabla )v(s,f) ds, \nonumber\\
\end{eqnarray}
for all $t \in [0,T]$.
 Since  the Malliavin derivative operator $\D$ is closed, to prove the theorem it suffices to show that
\begin{equation}\label{e0.23}
\lim_{n\rightarrow \infty}\sup_{0\leq u\leq t}E[|\D_uv_n(t,f_n)-Z_u(t,f) |_H^2]=0.
\end{equation}

To prove (3.37), consider the following:

\begin{eqnarray}\label{e0.24}
|\D_uv(t,f_n)-Z_u(t,f) |_H^2
&=& -2\nu \int_0^t ||\D_uv(s,f_n)-Z_u(s,f)||_V^2 ds\nonumber\\
&&-2\int_0^t \D_uQ(s)b(v(s,f_n), v(s,f_n)-v(s,f), \D_uv(s,f_n)-Z_u(s,f))ds\nonumber\\
&&-2\int_0^t \D_uQ(s)b(v(s,f_n)-v(s,f), v(s,f), \D_uv(s,f_n)-Z_u(s,f))ds\nonumber\\
&&-2\int_0^t Q(s)b(\D_uv(s,f_n), v(s,f_n)-v(s,f), \D_uv(s,f_n)-Z_u(s,f))ds\nonumber\\
&&-2\int_0^t Q(s)b(\D_uv(s,f_n)-Z_u(s,f), v(s,f), \D_uv(s,f_n)-Z_u(s,f))ds\nonumber\\
&&-2\int_0^t Q(s)b(v(s,f_n)-v(s,f), \D_uv(s,f_n), \D_uv(s,f_n)-Z_u(s,f))ds\nonumber\\
&&-2\int_0^t Q(s)b(v(s,f), \D_uv(s,f_n)-Z_u(s,f), \D_uv(s,f_n)-Z_u(s,f))ds \nonumber\\
&:=&J^n_1+J^n_2+J^n_3+J^n_4+J^n_5+J^n_6+J^n_7
\end{eqnarray}
for all $t \in [0,T]$. Note first that $J^n_7=0$ because the trilinear form $b(\cdot,\cdot,\cdot)$ is anti-symmetric with respect to the last two arguments. To estimate $J_2^n$ and $J_3^n$, note that
\begin{eqnarray}\label{e0.25}
J^n_2
&\leq & c \sup_{0\leq s\leq T}|\D_uQ(s)| \int_0^t |v(s,f_n)|_H^{\frac{1}{2}}||v(s,f_n)||_V^{\frac{1}{2}} |v(s,f_n)-v(s,f)|_H^{\frac{1}{2}} \nonumber\\
&&\quad\quad\quad \times ||v(s,f_n)-v(s,f)||_V^{\frac{1}{2}}||\D_uv(s,f_n)-Z_u(s,f))||_Vds\nonumber\\
&\leq & c \sup_{0\leq s\leq T}|\D_uQ(s)| \sup_{0\leq s\leq T}(|v(s,f_n)-v(s,f)|_H^{\frac{1}{2}})|f_n|_H^{\frac{1}{2}}\nonumber\\
&&\times \int_0^t ||v(s,f_n)||_V^{\frac{1}{2}} ||v(s,f_n)-v(s,f)||_V^{\frac{1}{2}}||\D_uv(s,f_n)-Z_u(s,f))||_Vds\nonumber\\
&&\longrightarrow 0, \quad \text{as}\quad n\rightarrow \infty.
\end{eqnarray}
and
\begin{eqnarray}\label{e0.26}
J^n_3
&\leq & c \sup_{0\leq s\leq T}|\D_uQ(s)| \int_0^t |v(s,f)|_H^{\frac{1}{2}}||v(s,f)||_V^{\frac{1}{2}} |v(s,f_n)-v(s,f)|_H^{\frac{1}{2}} \nonumber\\
&&\quad\quad\quad \times ||v(s,f_n)-v(s,f)||_V^{\frac{1}{2}}||\D_uv(s,f_n)-Z_u(s,f))||_Vds\nonumber\\
&\leq & c \sup_{0\leq s\leq T}|\D_uQ(s)| \sup_{0\leq s\leq T}(|v(s,f_n)-v(s,f)|_H^{\frac{1}{2}})|f|_H^{\frac{1}{2}}\nonumber\\
&&\times \int_0^t ||v(s,f)||_V^{\frac{1}{2}} ||v(s,f_n)-v(s,f)||_V^{\frac{1}{2}}||\D_uv(s,f_n)-Z_u(s,f))||_Vds\nonumber\\
&&\longrightarrow 0, \quad \text{as}\quad n\rightarrow \infty.
\end{eqnarray}
For $J^n_4$, we have
\begin{eqnarray}\label{e0.28}
J^n_4
&\leq & c \sup_{0\leq s\leq T}|Q(s)| \int_0^t |\D_uv(s,f_n)|_H^{\frac{1}{2}}||\D_uv(s,f_n)||_V^{\frac{1}{2}} |v_n(s,f_n)-v(s,f)|_H^{\frac{1}{2}}\nonumber\\
&&\quad \quad \quad  ||v_n(s,f_n)-v(s,f)||_V^{\frac{1}{2}}||\D_uv_n(s,f_n)-Z_u(s,f))||_Vds\nonumber\\
&\leq & c ||Q||_{\infty}\sup_{0\leq s\leq T}(|v(s,f_n)-v(s,f)|_H^{\frac{1}{2}})\sup_{0\leq s\leq T}(|\D_uv(s,f_n)|_H^{\frac{1}{2}})\nonumber\\
&&\quad\quad\quad \times (\int_0^T||\D_uv(s,f_n)||_V^{\frac{1}{2}}||v_n(s,f_n)-v(s,f)||_V^{\frac{1}{2}}
||\D_uv_n(s,f_n)-Z_u(s,f))||_Vds\nonumber\\
&&\longrightarrow 0, \quad \text{as}\quad n\rightarrow \infty.
\end{eqnarray}
The term $J^n_5$ can be estimated as follows:
\begin{eqnarray}\label{e0.29}
J^n_5
&\leq & c \sup_{0\leq s\leq T}|Q(s)| \int_0^t ||v(s,f)||_V ||\D_uv(s,f_n)-Z_u(s,f))||_V |\D_uv(s,f_n)-Z_u(s,f))|_H ds\nonumber\\
&\leq & +\frac{\nu}{2}\int_0^t||\D_uv_n(s,f_n)-Y_u(s,f))||_V^2ds\nonumber\\
  && +c_{\nu} ||Q||_{\infty}^2 \int_0^t ||v(s,f)||_V^2|\D_uv_n(s,f_n)-Y_u(s,f))|_H^2\,ds.
\end{eqnarray}
Furthermore,
\begin{eqnarray}\label{e0.30}
J^n_6
&\leq & c \sup_{0\leq s\leq T}|Q(s)| \int_0^t ||\D_uv(s,f_n)||_V |v(s,f_n)-v(s,f)|_H^{\frac{1}{2}} ||v(s,f_n)-v(s,f)||_V^{\frac{1}{2}}\nonumber\\
&&\quad\quad\quad ||\D_uv_n(s,f_n)-Y_u(s,f))||_V|\D_uv(s,f_n)|_H^{\frac{1}{2}} ds\nonumber\\
&\leq & c ||Q||_{\infty}\sup_{0\leq s\leq T}(|v(s,f_n)-v(s,f)|_H^{\frac{1}{2}}\sup_{0\leq s\leq T}(|\D_uv(s,f_n)|_H^{\frac{1}{2}}\nonumber\\
&&\times \int_0^t ||\D_uv(s,f_n)||_V^{\frac{1}{2}} ||v(s,f_n)-v(s,f)||_V^{\frac{1}{2}}||\D_uv_n(s,f_n)-Y_u(s,f))||_V ds
\nonumber\\
&&\longrightarrow 0, \quad \text{as}\quad n\rightarrow \infty.
\end{eqnarray}

Substituting (\ref{e0.25})--(\ref{e0.30}) into (\ref{e0.24}) and applying Gronwall's inequality,  we obtain
\begin{eqnarray}\label{e0.31}
\sup_{u\leq t\leq T}|\D_uv(t,f_n)-Z_u(t,f) |_H^2&&+\nu \int_0^T ||\D_uv(s,f_n)-Z_u(s,f)||_V^2 ds\nonumber\\
&\leq &\tilde{L}_n(\omega) \exp\bigg ( c_{\nu} ||Q||_{\infty}^2 \int_0^T ||v(s,f)||_V^2ds\bigg )\nonumber\\
&\leq &\tilde{L}_n(\omega) \exp\bigg ( c_{\nu} ||Q||_{\infty}^2 |f|_H^2\bigg )
\end{eqnarray}
where $\tilde{L}_n(\omega)$ is the sum of the right-hand sides of (\ref{e0.25}), (\ref{e0.26}), (\ref{e0.28}), (\ref{e0.30}) and is such that $\tilde{L}_n\rightarrow 0$ as $n\rightarrow \infty$. Finally, (\ref{e0.23}) follows from
dominated convergence theorem. \end{proof}

\section{The Anticipating SNSE}

We are now ready to state our main result whereby we replace the deterministic initial function $f$ in the the SNSE (3.1) by an anticipating random field $Y \in \D^{1,4}(H)$:
\begin{theorem}
Suppose $Y\in \D^{1,4}(H)$. Then $u(t,Y), t\geq 0$, solves the following anticipating Stratonovich SNSE:
\begin{equation}\label{e1.1}
u(t,Y)=Y-\int_0^tAu(s,Y)ds -\int_0^tB(u(s,Y))ds +\int_0^tu(s,Y)\circ dW(s).
\end{equation}
\end{theorem}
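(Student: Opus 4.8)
The plan is to set $u(t,Y,\o):=v(t,Y(\o),\o)\,Q(t,\o)$, where $f\mapsto v(t,f,\o)$ is the solution map of the random NSE~(\ref{eq:3.6}) and $Q(t)=\exp(W(t))$; recall that for each deterministic $f\in H$ the process $u(\cdot,f)=v(\cdot,f)Q$ solves the Stratonovich SNSE~(\ref{eq:3.2}). The proof then consists of transferring the integral identity~(\ref{eq:3.2}) from a deterministic $f$ to the anticipating field $Y$ by means of anticipating stochastic calculus, the transformation by $Q$ being precisely what makes this possible.

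First I would check that $u(\cdot,Y)$ is regular enough to lie in the domain of the anticipating Stratonovich integral. By Proposition~3.2 the map $f\mapsto v(t,f,\o)$ is Fr\'echet $C^{1,1}$ with derivative bounded as in~(\ref{3.24}); by Theorem~3.1, together with the estimate of Proposition~3.5, $\o\mapsto v(t,f,\o)$ is Malliavin differentiable; and $Q\in\D^{1,p}$ for all $p$ with exponential moments. The Malliavin chain rule for the composition $\o\mapsto(Y(\o),\o)\mapsto v(t,Y(\o),\o)$ then gives
\[
\D_s\big[v(t,Y)\big]=Dv(t,Y)\,\D_sY+\big(\D_sv\big)(t,f)\big|_{f=Y},
\]
so $u(\cdot,Y)=v(\cdot,Y)Q\in\D^{1,2}$ with suitable integrability. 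This is the step where the hypothesis $Y\in\D^{1,4}(H)$ (not merely $\D^{1,2}$) is used: the fourth moment of $\D Y$ is exactly what controls the products produced by the quadratic term $B$, mirroring the factors $|f|_H^{4}$ in Propositions~3.3 and~3.5.

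The core of the argument is the evaluation of $\int_0^t u(s,Y)\circ dW(s)$. Using $u(s,Y)=v(s,Y)Q(s)$, the identity $Q(s)\circ dW(s)=dQ(s)$, and the chain rule for anticipating Stratonovich integrals, rewrite this as $\int_0^t v(s,Y)\circ dQ(s)$. Since for $P$-a.e.\ $\o$ the trajectory $s\mapsto v(s,Y(\o),\o)$ is the solution path of~(\ref{eq:3.6}) started at $Y(\o)$, it is absolutely continuous in $s$ with values in $V'$ and has zero quadratic variation; the anticipating integration-by-parts formula then yields
\[
\int_0^t v(s,Y)\circ dQ(s)=v(t,Y)Q(t)-Y-\int_0^t Q(s)\,d_sv(s,Y),
\]
the last term being a pathwise Bochner integral in $V'$. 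Substituting $d_sv(s,Y)=-Av(s,Y)\,ds-Q(s)B(v(s,Y))\,ds$ and using the bilinearity of $B$ together with $v=uQ^{-1}$, so that $Q(s)Av(s,Y)=Au(s,Y)$ and $Q^{2}(s)B(v(s,Y))=B(u(s,Y))$, we arrive at
\[
\int_0^t u(s,Y)\circ dW(s)=u(t,Y)-Y+\int_0^t Au(s,Y)\,ds+\int_0^t B(u(s,Y))\,ds,
\]
which is~(\ref{e1.1}) after rearranging. The remaining integrals $\int_0^tAu(s,Y)\,ds$ and $\int_0^tB(u(s,Y))\,ds$ are ordinary Bochner integrals and are finite by the energy bounds~(\ref{3.7}),~(\ref{3.8}) and Proposition~3.3.

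The main obstacle is making the two displayed stochastic-calculus identities rigorous for the \emph{non-adapted} integrand $v(\cdot,Y)$: namely, that $v(\cdot,Y)Q$ belongs to the domain of the anticipating Stratonovich integral and that one may replace $Q(s)\circ dW(s)$ by $dQ(s)$ and integrate by parts under the integral sign. I would handle this by expressing the anticipating Stratonovich integral as a Skorohod integral plus its trace term and appealing to the substitution formula and the anticipating It\^o formula for Skorohod integrals (in the spirit of anticipating stochastic calculus, Nualart--Pardoux), the finite-variation character of $v(\cdot,Y)$ making the trace terms explicit; all integrability conditions are then verified from~(\ref{3.7}),~(\ref{3.8}),~(\ref{3.24}), Proposition~3.3 and Proposition~3.5. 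As in the proof of Proposition~3.4, a preliminary localization replacing $Q$ by $Q_N=\exp(W\wedge N)$ reduces everything to bounded coefficients and makes the integrability uniform, after which one removes the cutoff by letting $N\to\i$.
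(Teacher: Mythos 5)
Your overall strategy coincides with the paper's: write $u(t,Y)=v(t,Y)Q(t)$, where $v(t,f,\o)$ is the solution map of the random NSE (\ref{eq:3.6}) evaluated at $f=Y(\o)$, and then upgrade the resulting pathwise identity to the anticipating Stratonovich equation (\ref{e1.1}). The difference lies in how the central step is justified. You package it as two citable identities --- associativity of the Stratonovich integral ($u\circ dW=v\circ dQ$) and an anticipating integration by parts for the product of the finite--variation path $v(\cdot,Y)$ with $Q$ --- and the algebra that follows does land exactly on (\ref{e1.1}). But the paper explicitly notes that the existing chain rules and substitution formulas of anticipating calculus cannot be applied here, because $Au(s,Y)$ and $B(u(s,Y))$ take values in $V'$ while $u(s,Y)$ takes values in $H$; supplying a direct proof of precisely the two identities you invoke \emph{is} the content of the paper's argument. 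Concretely, the paper discretizes $v(t,Y)Q(t)-Y$ over a partition, identifies the drift limits $T^n_1,T^n_2,T^n_4$, converts the stochastic term $T^n_3$ into a Skorohod integral plus a correction $\int_0^tG^n(s)\,ds$ via the factorization property of the Skorohod integral, proves that the step approximations $F^n$ converge to $u(\cdot,Y)$ in $\Bbb{L}^{1,2}_{loc}(H)$ (this is where $Y\in\D^{1,4}(H)$ and the estimates of Propositions 3.1--3.5 enter, exactly as you anticipate), and only then passes to the Stratonovich integral via Nualart's Theorem 3.1.1.

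The gap in your write-up is therefore concentrated in the sentence ``the anticipating integration-by-parts formula then yields\dots'': the identification of the pathwise Riemann--Stieltjes integral $\int_0^t v(s,Y)\,dQ(s)$ with the Nualart--Pardoux Stratonovich integral of the non-adapted $H$-valued process $u(\cdot,Y)$ is not free. It requires the membership $u(\cdot,Y)\in\Bbb{L}^{1,2}_{loc}(H)$, the chain rule $\D_s v(t,Y)=\D_s v(t,f)\big|_{f=Y}+Dv(t,Y)(\D_sY)$, and the computation of the one-sided traces: using $\D_sv(t,f)=0$ for $s>t$ and $\D_sQ(s+\ep)=Q(s+\ep)$ one gets $\D^-_su(\cdot,Y)(s)=Dv(s,Y)(\D_sY)Q(s)$ and $\D^+_su(\cdot,Y)(s)=Dv(s,Y)(\D_sY)Q(s)+u(s,Y)$, so the symmetric trace contributes $\tfrac12u(s,Y)+Q(s)Dv(s,Y)(\D_sY)$, which is exactly what cancels the extra terms in (\ref{e1.14}) and yields (\ref{e1.15}). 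In your route this cancellation is invisible because it is absorbed into the definition of $\int u\circ dW$, but it still has to be established; the substitution formula you would like to cite is not available in this $V\subset H\subset V'$ setting. You have correctly located the obstacle and named the right tools (Skorohod plus trace, localization via $Q_N$ and truncation of $|Y|_H$), so the plan is sound, but to close the proof you must carry out the discretization and trace computation rather than appeal to the off-the-shelf anticipating It\^o formula.
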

\begin{proof}
 Note that $u(t,Y)$ takes values in $H$. But $Au(s,Y)$ and $B(u(s,Y))$ belong to $V^{\prime}$. Because of this special infinite-dimensional setting, the existing chain rules in the literature could not be applied. We will therefore give a direct proof.

Fix $t>0$ and let $\{ 0=t_0^n<t_1^n<...<t_{k_n}^n=t\}, n\geq 1$ be a sequence of partitions of the interval $[0,t]$ such that $\tau^n=max_i(t^n_{i+1}-t^n_i)\rightarrow 0$ as $n\rightarrow \infty$. Write
\begin{eqnarray}\label{e1.2}
u(t,Y)-Y&=&v(t,Y)Q(t)-Y \nonumber\\
&= &\sum_{i=0}^{k_n-1}(v(t_{i+1},Y)Q(t_{i+1})-v(t_i,Y)Q(t_i)) \nonumber\\
&= &\sum_{i=0}^{k_n-1}Q(t_{i+1})(v(t_{i+1},Y)-v(t_i,Y))+\sum_{i=0}^{k_n-1}v(t_i,Y)(Q(t_{i+1})-Q(t_{i}))\nonumber\\
&=&-\sum_{i=0}^{k_n-1}\int_{t_i}^{t_{i+1}}Q(t_{i+1})Av(s,Y)ds
-\sum_{i=0}^{k_n-1}\int_{t_i}^{t_{i+1}}Q(t_{i+1})Q(s)B(v(s,Y))ds\nonumber\\
&+&\sum_{i=0}^{k_n-1}v(t_i,Y)\int_{t_i}^{t_{i+1}}Q(s)dW(s)+\frac{1}{2}\sum_{i=0}^{k_n-1}v(t_i,Y)\int_{t_i}^{t_{i+1}}Q(s)\,ds\nonumber\\
&:=& T^n_1+T^n_2+T^n_3+T^n_4.
\end{eqnarray}
As $v(s,Y), Q(s)$ are continuous in $s$, clearly we have
\begin{eqnarray}\label{e1.3}
&&\lim_{n\rightarrow\infty}T^n_1=-\int_0^tAu(s,Y)ds, \\ &&\lim_{n\rightarrow\infty}T^n_2=-\int_0^tQ(s)^2B(v(s,Y))ds=-\int_0^tB(u(s,Y))ds
\end{eqnarray}
and
\begin{equation}\label{e1.4}
\lim_{n\rightarrow\infty}T^n_4=\frac{1}{2}\int_0^tQ(s)v(s,Y)ds=\frac{1}{2}\int_0^tu(s,Y)ds.
\end{equation}
By the property of the Skorohod integral ([N], p. 40), we have
 \begin{eqnarray}\label{e1.5}
T^n_3
&=&\sum_{i=0}^{k_n-1}\int_{t_i}^{t_{i+1}}v(t_i,Y)Q(s)dW(s)+\sum_{i=0}^{k_n-1}\int_{t_i}^{t_{i+1}}\D_s(v(t_i,Y))Q(s)ds\nonumber\\
&=&\int_{0}^{t}\sum_{i=0}^{k_n-1}v(t_i,Y)Q(s)\chi_{(t_i, t_{i+1}]}(s)dW(s) +\int_0^t\sum_{i=0}^{k_n-1}\D_s(v(t_i,Y))\chi_{(t_i, t_{i+1}]}(s)Q(s)ds\nonumber\\
&=&\int_{0}^{t}F^n(s)dW(s)+\int_0^tG^n(s)ds,
\end{eqnarray}
where
$$F^n(s):=\sum_{i=0}^{k_n-1}v(t_i,Y)Q(s)\chi_{(t_i, t_{i+1}]}(s), $$
$$G^n(s):=\sum_{i=0}^{k_n-1}\D_s(v(t_i,Y))\chi_{(t_i, t_{i+1}]}(s)Q(s), \quad 0 \leq s\leq t.$$
Recall that $\Bbb{L}^{1,2}(H)$ (see \cite{N})  is the class of $H$-valued processes $u$ such that $u(t)\in \D^{1,2}(H)$ for almost all $t$, and there exists a measurable version of the two parameter process $D_su(t)$ verifying $E[\int_0^T\int_0^T|D_su(t)|_H^2dsdt]<\infty$. We say that $u\in \Bbb{L}^{1,2}_{loc}(H)$ if there exists a sequence
$\{ (\Omega_n, u^n), n\geq 1\}\subset {\cal F}\times \Bbb{L}^{1,2}(H)$ such that $\Omega_n$ increases to $\Omega$ a.s. and $u=u^n$ a.e on $[0,T]\times \Omega_n$.
We first show that $F^n(\cdot)\rightarrow u(\cdot, Y)=v(\cdot, Y)Q(\cdot)$ in $\Bbb{L}^{1,2}_{loc}(H)$ as $n\rightarrow \infty$. To this end, we may assume without loss of generality that $|Y|_H\leq M$ for some constant $M$ and $Q=Q_N$. This is because, otherwise, we can replace $Y$ by
 $Y\phi(|Y|_N)$ where $\phi\in C_0^{\infty}(R)$ is a smooth bump function satisfying $\phi(x)=1$ whenever $|x|\leq M$ and $\phi(x)=0$ when $|x|> M+1$. Note that $v(s,Y)$ is continuous in $s$. It is clear that $F^n(s)\rightarrow u(s, Y)=v(s, Y)Q(s)$
for every $s\geq 0$. Moreover,
\begin{eqnarray}\label{e1.6}
\sup_{0\leq s\leq t}|F^n(s)|_H &\leq& (\sup_{0\leq s\leq t}|v(s, Y)|_H)(\sup_{0\leq s\leq t}Q(s))\nonumber\\
&\leq& |Y|_H(\sup_{0\leq s\leq t}Q(s))
\end{eqnarray}
The dominated convergence theorem yields that
\begin{equation}\label{e1.7}
\lim_{n\rightarrow \infty}E[\int_0^t |F^n(s)-u(s, Y)|_H^2ds]=0
\end{equation}
The Malliavin derivative of $F^n$ is given by
 \begin{eqnarray}\label{e1.8}
\D_uF^n(s)&=& \sum_{i=0}^{k_n-1}\D_u[v(t_i,Y)Q(s)]\chi_{(t_i, t_{i+1}]}(s) \nonumber\\
&=& \sum_{i=0}^{k_n-1}[\D_u(v(t_i,Y))Q(s)+v(t_i,Y)\D_uQ(s)]\chi_{(t_i, t_{i+1}]}(s)\nonumber\\
&=&\sum_{i=0}^{k_n-1}v(t_i,Y)\D_uQ(s)\chi_{(t_i, t_{i+1}]}(s)\nonumber\\
&+&\sum_{i=0}^{k_n-1}[\D_uv(t_i,Y)+Dv(t_i,Y)(\D_uY)]Q(s)\chi_{(t_i, t_{i+1}]}(s),
\end{eqnarray}
where $D v(s,f)$ stands for the Fr\'echet derivative of the mapping $v(s, \cdot)$ at the function $f$
and $\D_u v(t_i,Y)=\D_uv(t_i,f)\bigg |_{f=Y}$.
Since $v(s,Y)$, $\D_uv(s,Y)$ and $D v(s,Y)$ are continuous in $s$, it is easily seen that $\D_uF^n(s)\rightarrow \D_u(u(s, Y))=\D_u(v(s, Y)Q(s))$ for every $s\geq 0$. In view of Proposition 3.1, Proposition 3.2, and (3.30), it follows from (\ref{e1.8}) that
\begin{eqnarray}\label{e1.9}
|\D_uF^n(s)|_H&\leq & |Y|_H \sup_{0\leq s\leq t}|\D_uQ(s)| \chi_{[0, t]}(u)\nonumber\\
&& + c \sup_{0\leq s\leq t}|\D_uQ(s)| ||Q||_{\infty}|Y|_H^4 \exp(c||Q||_{\infty}^2|Y|_H^2)\chi_{[0, t]}(u)  \nonumber\\
&&+|\D_uY|_H \exp(c||Q||_{\infty}^2|Y|_H^2)||Q||_{\infty}
\end{eqnarray}
Thus from the dominated convergence theorem it follows that
\begin{equation}\label{e1.10}
\lim_{n\rightarrow \infty}E \bigg[\int_0^t \int_0^{\infty}|\D_uF^n(s)-\D_u(u(s, Y))|_H^2\,du\, ds \bigg ]=0
\end{equation}
The relations (\ref{e1.7}) and (\ref{e1.10}) imply that
$F^n(\cdot)\rightarrow u(\cdot, Y)=v(\cdot, Y)Q(\cdot)$ in $\Bbb{L}^{1,2}_{loc}(H)$ as $n\rightarrow \infty$. Consequently, we have
\begin{equation}\label{e1.11}
\lim_{n\rightarrow \infty}\int_0^tF^n(s)dW(s)=\int_0^tu(s,Y)dW(s),
\end{equation}
where the integrals in the above relation are Skorohod integrals. To compute
$\displaystyle \lim_{n \to \i} G^n(s)$, consider
\begin{eqnarray}\label{e1.12}
G^n(s)&=& Q(s)\sum_{i=0}^{k_n-1}[\D_sv(t_i,Y)+D v(t_i,Y)(\D_sY)]\chi_{(t_i, t_{i+1}]}(s) \nonumber\\
&=& Q(s)\sum_{i=0}^{k_n-1}D v(t_i,Y)(\D_sY)\chi_{(t_i, t_{i+1}]}(s),
\end{eqnarray}
where we have used the fact that $\D_sv(t_i,f)=0$ for $s>t_i$. Since $D v(s,Y)$ is continuous in $s$, we see that
\begin{equation}\label{e1.13}
\lim_{n\rightarrow \infty}\int_0^tG^n(s)ds=\int_0^tQ(s)D v(s,Y)(\D_sY)ds
\end{equation}
for all $t \geq 0$.

Putting (\ref{e1.2})--(\ref{e1.13}) together and letting $n\rightarrow \infty$, we arrive at
\begin{eqnarray}\label{e1.14}
u(t,Y)-Y&=&v(t,Y)Q(t)-Y\nonumber\\
&=&-\int_{0}^{t}Au(s,Y)ds-\int_{0}^{t}B(u(s,Y))ds+\int_{0}^{t}u(s,Y)dW(s)\nonumber\\
&&+ \frac{1}{2}\int_0^tu(s,Y)ds+\int_0^tQ(s)D v(s,Y)(\D_sY)ds
\end{eqnarray}
for all $t \geq 0$.

To complete the proof of the theorem, we need to show that
\begin{eqnarray}\label{e1.15}
\int_{0}^{t}u(s,Y)\circ dW(s)
&=&\int_{0}^{t}u(s,Y)dW(s)+ \frac{1}{2}\int_0^tu(s,Y)ds+\int_0^tQ(s){\cal D}v(s,Y)(\D_sY)\,ds \nonumber \\
\end{eqnarray}
for all $ t \geq 0$.
Define
$$\D^+_s u(s,Y):=\lim_{\varepsilon\rightarrow 0+}\D_s u(s+\varepsilon,Y),$$
$$D^-_s u(s,Y):=\lim_{\varepsilon\rightarrow 0+}\D_s u(s-\varepsilon,Y),$$
$$(\nabla u(\cdot,Y))(s):=\frac{1}{2}[\D^+_s u(s,Y) + D^-_s u(s,Y)]$$
for $s > 0$. Then, by Theorem 3.1.1 in \cite{N}, we know that
\begin{eqnarray}\label{e1.16}
\int_{0}^{t}u(s,Y)\circ dW(s)
&=&\int_{0}^{t}u(s,Y)dW(s)+ \frac{1}{2}\int_0^t(\nabla u(\cdot,Y))_sds, \quad t \geq 0.
\end{eqnarray}
Thus, it remains to show that
\begin{equation}\label{e1.17}
\frac{1}{2}(\nabla u(\cdot,Y))(s)=\frac{1}{2}u(s,Y)+Q(s)Dv(s,Y)(\D_sY)
\end{equation}
The Malliavin derivative of $u(t,Y)$ is given by
\begin{eqnarray}\label{e1.18}
\D_s(u(t,Y))&=&\D_s(v(t,Y)Q(t))\nonumber\\
&=&\D_s(v(t,Y))Q(t)+v(t,Y)\D_sQ(t)\nonumber\\
&=&\D_sv(t,Y)Q(t)+Dv(t,Y)(\D_sY)Q(t)+v(t,Y)\D_sQ(t)
\end{eqnarray}
for all $t \geq 0$.  Replacing $t$ by $s-\varepsilon$ in (\ref{e1.18}) we get
$$\D_s(u(s-\varepsilon,Y))={\cal D}v(s-\varepsilon,Y)(\D_sY)Q(s-\varepsilon ),$$
where we have used the fact that $\D_sv(s-\varepsilon,Y)=0$, $\D_sQ(s-\varepsilon )=0$. This yields that
\begin{equation}\label{e1.19}
D^-_s(u(\cdot, Y))(s)=Dv(s,Y)(\D_sY)Q(s).
\end{equation}
Next, we replace $t$ by $s+\varepsilon$ in (\ref{e1.18}), let $\varepsilon \rightarrow 0$ and use the continuity of the
the functions involved to obtain
\begin{equation}\label{e1.20}
D^+_s(u(\cdot, Y))(s)=Dv(s,Y)(\D_sY)Q(s)+u(s,Y),
\end{equation}
where we have used the facts $\lim_{\varepsilon\rightarrow 0}\D_sv(s+\varepsilon, Y)=0$ and $\D_sQ(s+\varepsilon)=Q(s+\varepsilon)$. Finally, (\ref{e1.17}) follows from (\ref{e1.19}) and (\ref{e1.20}).

\end{proof}

\bigskip


\begin{thebibliography}{[M-N-S]}
\addcontentsline{toc}{chapter}{References}

\bibitem[B-C-F]{B-C-F}
Z. Brze\'zniak, M. Capinski, and F. Flandoli,
\textit{Stochastic Navier-Stokes equations with multiplicative noise}, Stochastic Analysis and Applications \textbf{10} (1992), 53--532.

\bibitem[B-L]{B-L} Z. Brze\'zniak, and Y. Li, \textit{Asymptotic compactness and absorbing sets for 2D stochastic Navier-Stokes equations on some unbounded domains}  Trans. Amer. Math. Soc.  358  (2006),  no. 12, 5587--5629

\bibitem[D-Z.1]{D-Z.1}
G. Da Prato and J. Zabczyk,
\textit{Stochastic Equations in Infinite Dimensions},
Cambridge University Press, Cambridge, 1992.

\bibitem[D-Z.2]{D-Z.2}
G. Da Prato and J. Zabczyk,
\textit{Ergodicity for Infinite Dimensional Systems},
Cambridge University Press, Cambridge, 1996.

\bibitem[D-L-S.1]{D-L-S.1}
J. Duan, K. Lu, and B. Schmalfuss,
\textit{Invariant manifolds for stochastic partial differential equations},
Annals of Probability \textbf{31} (2003), 2109--2135.

\bibitem[Fl.1]{Fl.1}F. Flandoli, \textit{Stochastic differential equations in fluid dynamics},  Rend. Sem. Mat. Fis. Milano  66  (1996), 121--148.

\bibitem[Gourcy]{Gourcy} M. Gourcy, \textit{A large deviation principle for 2D stochastic Navier-Stokes equation},  Stochastic Process. Appl.  117  (2007),  no. 7, 904--927.

\bibitem[H-M]{H-M} Hairer, M., and Mattingly, J. C., \textit{Ergodicity of the 2D Navier-Stokes equations with degenerate stochastic forcing}, Ann. of Math. (2)  164
(2006),  no. 3, 993--1032.

\bibitem[M-Z]{M-Z} Mohammed, S.-E. A., Zhang,
T. S., \textit{Dynamics of stochastic 2D Navier-Stokes equations }, Journal of Functional Analysis,  (2008), pp. 105.



\bibitem[N]{N} Nualart, D.,
T. S., \textit{The Malliavin Calculus and Related Topics }, Springer-Verlag, 1995.


\bibitem[Ro]{Ro}
J. C.  Robinson,
\textit{Infinite-Dimensional Dynamical Systems}, Cambridge Texts in Applied Mathematics,
Cambridge University Press, Cambridge, 2001.

\bibitem[S-S]{S-S} Sritharan, S. S. and Sundar, P. \textit{Large deviations for the two-dimensional Navier-Stokes equations with multiplicative noise}, Stochastic Process. Appl. 116 (2006), no. 11, 1636--1659.


\bibitem[Te]{Te}
R. Temam,
\textit{Infinite-Dimensional Dynamical Systems in Mechanics and Physics},
Springer-Verlag, New York, 1988.

\bibitem[Te1]{Te1}
R. Temam, \textit{Navier-Stokes Equations}, North-Holland Amsterdam, New York, Oxford,
1979.
\end{thebibliography}
\end{document}